\long\def\comment#1{}
\newtheorem{corollary}{Corollary}
\newtheorem{assumption}{Assumption}
\newtheorem{remark}{Remark}
\newtheorem{lemma}{Lemma}
\newtheorem{theorem}{Theorem}
\newtheorem{example}{Example}
\begin{document}

\setlength{\arraycolsep}{0.3em}

\title{Distributed Online Convex Optimization with an Aggregative Variable
\thanks{}}

\author{Xiuxian Li, Xinlei Yi, and Lihua Xie
\thanks{Corresponding author: L. Xie.}
\thanks{X. Li and L. Xie are with School of Electrical and Electronic Engineering, Nanyang Technological University, 50 Nanyang Avenue, Singapore 639798 (e-mail: xxli@ieee.org; elhxie@ntu.edu.sg).}
\thanks{X. Yi is with the Division of Decision and Control Systems, School of Electrical Engineering and Computer Science, KTH Royal Institute of Technology, 100 44, Stockholm, Sweden (e-mail: xinleiy@kth.se).}
}

\maketitle

\setcounter{equation}{0}
\setcounter{figure}{0}
\setcounter{table}{0}

\begin{abstract}
This paper investigates distributed online convex optimization in the presence of an aggregative variable without any global/central coordinators over a multi-agent network, where each individual agent is only able to access partial information of time-varying global loss functions, thus requiring local information exchanges between neighboring agents. Motivated by many applications in reality, the considered local loss functions depend not only on their own decision variables, but also on an aggregative variable, such as the average of all decision variables. To handle this problem, an Online Distributed Gradient Tracking algorithm (O-DGT) is proposed with exact gradient information and it is shown that the dynamic regret is upper bounded by three terms: a sublinear term, a path variation term, and a gradient variation term. Meanwhile, the O-DGT algorithm is also analyzed with stochastic/noisy gradients, showing that the expected dynamic regret has the same upper bound as the exact gradient case. To our best knowledge, this paper is the first to study online convex optimization in the presence of an aggregative variable, which enjoys new characteristics in comparison with the conventional scenario without the aggregative variable. Finally, a numerical experiment is provided to corroborate the obtained theoretical results.
\end{abstract}

\begin{IEEEkeywords}
Distributed algorithms, online convex optimization, aggregative variable, dynamic regret, multi-agent networks.
\end{IEEEkeywords}

\section{Introduction}\label{s1}

Online optimization/learning is a sequence of decision making processes, where a sequence of time-varying (and possibly adversarial) loss functions are revealed gradually to the decision maker. Online optimization has numerous practical applications such as in machine learning, auctions, dictionary learning, portfolio management, and neural networks \cite{zinkevich2003online,hazan2007logarithmic,mairal2009online,shalev2012online}, to name just a few.

With the development of advanced communication and computing technologies and the emergence of large-scale datasets, distributed online optimization has become a hot topic in recent two decades, where a finite group of agents, such as robots, computing units, processors, autonomous vehicles, and sensors, aim to solve a global online optimization problem in a cooperative manner by local information exchanges between neighboring agents. It should be noted that each individual agent can access only partial information on the global problem, and the partial information may be private to each agent who is unwilling to expose the information to others. Usually, there exists a global (central) coordinator/master in centralized online optimization, while it is not practical in distributed online optimization. In contrast to centralized online optimization, the distributed case has overwhelming advantages, including lower cost, better robustness, and privacy preservation, etc.

The studied problem in distributed online optimization is generally in the form of $\sum_{i=1}^N f_{i,t}(x_i)$ subject to $x_i=x_j$ for all $i,j\in[N]$ and possible equality/inequality constraints, where $N$ is the number of agents in the network, from which one can observe that each local loss function $f_{i,t}$ depends only on its own decision variable $x_i$. However, in many realistic applications, such as warehouse location problem, transportation systems, signal processing, target surrounding by robots and unmanned aerial vehicles (UAVs), the local loss functions usually rely on other agents' decision variables besides its own variable. For example, in target surrounding, a collection of agents (such as robots, UAVs, or autonomous vehicles) desire to surround a target in order to protect the target from the attack of intruders, and in this case, local loss functions will rest not only on its own decision variable (such as position), but also on the average of the decision variables of all agents. Theoretically, no existing algorithms are available to the case with an aggregative variable, thus making it a challenging problem.

Motivated by the above facts, this paper is concerned with the scenario where each local loss function depends not only on its own variable, but also on an aggregative variable, which is a global variable and not known to any individual agent.

\subsection{Related Works}

Online convex optimization (OCO) was firstly studied in a centralized setup, including the case with only feasible set constraints \cite{zinkevich2003online,shalev2012online,chen2018projection} (having the optimal static regret bound $O(\sqrt{T})$), the case with static inequality constraints \cite{mahdavi2012trading,jenatton2016adaptive,yuan2018online}, and the case with time-varying inequality constraints \cite{yu2017online,chen2017online,sun2017safety,cao2018online,wei2019online}.

As for distributed OCO, which is our main focus in this paper, various scenarios have been addressed in the literature, such as \cite{zhang2017projection,mateos2014distributed,akbari2017distributed,nedic2015decentralized,koppel2015saddle,shahrampour2018distributed,hosseini2016online,lee2017stochastic,yuan2017adaptive,lee2017sublinear}, to quote a few. For instance, distributed online unconstrained optimization problems have been investigated in \cite{mateos2014distributed} and \cite{akbari2017distributed}, where an online subgradient descent algorithm and a distributed online subgradient push-sum algorithm are proposed, respectively. Meanwhile, many algorithms have been developed for the case with global/local set constraints in the literature, such as, a variant of the Arrow-Hurwicz saddle point algorithm \cite{koppel2015saddle}, Nesterov based primal-dual algorithm \cite{nedic2015decentralized}, dual subgradient averaging algorithm \cite{hosseini2016online}, distributed primal-dual algorithm \cite{lee2017stochastic}, and mirror descent algorithm \cite{shahrampour2018distributed}. Moreover, local static inequality constraints have been addressed in \cite{yuan2017adaptive} by a consensus-based adaptive primal-dual subgradient algorithm. Smart grid networks have been discussed as an application of distributed online optimization in \cite{zhou2017incentive}. Furthermore, a general constraint, i.e., a static coupled inequality constraint, has been considered in \cite{lee2017sublinear} and \cite{li2018distributedon}, where a sublinear static regret is ensured by distributed primal-dual algorithms. More recently, time-varying coupled inequality constraints have been studied in \cite{yi2019distributed} and \cite{yi2019distributed2} with full gradients and bandit feedback, respectively.

It can be found that all aforementioned works focus on the case where each local loss function is dependent only on its own decision variable. Inspired by the practical applications, this paper investigates the case where an aggregative variable, building upon all agents' decision variables, is involved in each local loss function, thus requiring new techniques for dealing with this new problem.

\subsection{Our Contributions}

To our best knowledge, this paper is the first to investigate online optimization with an aggregative variable. To tackle this problem, a novel algorithm with full gradients, called online distributed gradient tracking algorithm (O-DGT), is developed. It is rigorously proved that the dynamic regret is upper bounded by three terms: a sublinear term $O(\sqrt{T})$, a path variation term, and a gradient variation term, thus obtaining a sublinear dynamic regret when the path and gradient variation terms are both sublinear. Note that even in the conventional case without the aggregative variable, it is necessary for the dynamic regret to be bounded by the path variation, since achieving a sublinear dynamic regret is impossible if the path variation is too large \cite{yi2019distributed}. Also, the gradient variation is a new term required for the dynamic regret bound, which is generally unnecessary for the case without the aggregative variable. The reason behind this phenomenon is that the current online optimization with the aggregate variable also needs to estimate the gradients of other agents' loss functions. Moreover, when using a constant stepsize, the upper bound on dynamic regret for the distributed online optimization can be established which is almost the same as the centralized algorithms, except that it requires to know the information on $T$, the path variation, and the squared gradient variation beforehand, which are also used in centralized algorithms.

On the other hand, instead of true gradient information, we further study the O-DGT algorithm with stochastic/noisy gradients. It is shown rigorously that the expected dynamic regret has the same upper bound as the full gradient case.

As by-products, the aforementioned two results can be applied to the case when all loss functions are static, that is, all functions are independent of time. In this case, the O-DGT algorithm is renamed a distributed gradient tracking algorithm (DGT). For DGT with true gradients, it is shown that the algorithm is convergent to an optimizer at the rate of $O(1/\sqrt{T})$. Meanwhile, as for DGT with stochastic/noisy gradients, the algorithm can also be proved to be convergent to an optimizer in the sense of expectation with a rate $O(1/\sqrt{T})$. In comparison, the static case here is studied in the general convex setting, while \cite{li2020distributed} only addressed the strongly convex case.

{\em Notations:} Let $\mathbb{R}^n$ be the set of vectors with dimension $n>0$. Define $[k]=\{1,2,\ldots,k\}$ for an integer $k>0$. Denote by $col(z_1,\ldots,z_k)$ the column vector formed by stacking up $z_1,\ldots,z_k$. Let $\|\cdot\|$, $x^\top$, and $\langle x,y\rangle$ be the standard Euclidean norm, the transpose of $x\in\mathbb{R}^n$, and standard inner product of $x,y\in\mathbb{R}^n$, respectively. Let $\mathbf{1}$ and $\mathbf{0}$ be column vectors of compatible dimension with all entries being $1$ and $0$, respectively, and $I$ be the compatible identity matrix. $\otimes$ is the Kronecker product. Let $\nabla f$ and $Id$ denote the gradient of a function $f$ and the identity map, respectively.

\section{Preliminaries}\label{s2}

The {\em projection} of a point $x\in\mathbb{R}^n$ onto a closed convex set $S\subseteq\mathbb{R}^n$ is defined by $P_{S}(x):=\arg\min_{y\in S}\|x-y\|$, satisfying:
\begin{align}
\|P_S(x)-P_S(y))\|\leq \|x-y\|,~~~\forall x,y\in \mathbb{R}^n.       \label{2}
\end{align}

\subsection{Problem Formulation}\label{s2.3}

This paper considers a sequence of decision making problems, where there exist a sequence of time-varying (and maybe adversarial) loss functions $\{f_t\}_{t=0}^\infty$, which are called global loss functions and are separable. To be specific, $f_t$ consists of a sum of local loss functions $f_{i,t}$'s, i.e.,
\begin{align}
f_t(x)&=\sum_{i=1}^N f_{i,t}(x_i,\nu(x)),              \nonumber\\
\nu(x)&:=\frac{1}{N}\sum_{i=1}^N \psi_i(x_i)            \label{3}
\end{align}
for $x=col(x_1,\ldots,x_N)$, where $x_i\in X_i\subseteq\mathbb{R}^{n_i}$, $N$ is the number of agents involved in the problem, $\psi_i: X_i\to \mathbb{R}^d$ is a differentiable function for $i\in[N]$, and $\nu:\mathbb{R}^n\to\mathbb{R}^d$ is called an {\em aggregative variable} with $n:=\sum_{i=1}^N n_i$, since $\nu(x)$ represents an aggregative information of all decision variables, including the average $\sum_{i=1}^N x_i/N$ as a special case.

In this problem, $f_{i,t}$'s are revealed gradually, that is, for each $i\in[N]$, $f_{i,t}$ will be revealed to agent $i$ at time slot $t\geq 0$ only after agent $i$ has made its decision $x_{i,t}$. Also, each agent $i$ is only privately accessible to $f_{i,t}$ (along with its true/stochastic gradients) after making its decision $x_{i,t}$, without awareness of other local loss functions $f_{j,t}$'s for $j\neq i$. Moreover, $\psi_i$ is only privately known to agent $i$ for all $i\in[N]$, and each agent $i\in[N]$ only realizes its own decision variable $x_i$ without any knowledge of other agents' decision variables $x_j$'s for $j\neq i$.

The objective is to minimize the total loss over a time horizon $T>0$, i.e.,
\begin{align}
&\mathop{\min}_{x_1,\ldots,x_T\in X}~~~\sum_{t=1}^T f_t(x_t),          \label{1}\\
&\hspace{1.6cm}f_t(x_t):=\sum_{i=1}^N f_{i,t}(x_{i,t},\nu(x_t)),       \nonumber
\end{align}
where $X:=\prod_{i=1}^N X_i$ is the Cartesian product of $X_i$'s, and $x_{i,t}$ is the decision variable of agent $i$ made at time $t\geq 0$.

In doing so, a performance metric, called {\em dynamic regret}, is conventionally employed for online optimization, i.e.,
\begin{align}
\mathcal{R}_T:=\sum_{t=1}^T f_{t}(x_{t})-\sum_{t=1}^T f_{t}(x_{t}^*),        \label{4}
\end{align}
where $x_t:=col(x_{1,t},\ldots,x_{N,t})$, and $x_t^*:=\mathop{\arg\min}_{x\in X}f_t(x)$ is the best decision variable at time step $t$. Then, an algorithm is announced ``good'' if the dynamic regret $\mathcal{R}_T$ is sublinear with respect to $T$, i.e., $\mathcal{R}_T=o(T)$. It should be noted that many works have employed the static regret as a performance metric, which makes use of $x^*=\mathop{\arg\min}_{x\in X}\sum_{t=1}^T f_t(x)$ as a comparator, instead of $x_t^*$'s. Obviously, the dynamic regret is more meaningful as the global loss function is time-varying.

\begin{remark}\label{r1}
It is worth mentioning that the dependence on the aggregative variable for local loss functions has been studied in aggregative games \cite{liang2017distributed2,garg2017local}, which is however different from the scenario in this paper. The main difference lies in that all agents/players in aggregative games aim to minimize their own local loss/payoff functions in a noncooperative manner, while all agents in problem (\ref{1}) desire to minimize the sum of their local loss functions in a cooperative fashion. As a result, their solution sets and optimality conditions are distinct, as shown in a simple example below.
\end{remark}

\begin{example}\label{e1}
Consider a simple time-invariant case, i.e., $f_t=f$ for all $t\geq 0$, where $f$ is some function in the form $f(x)=\sum_{i=1}^N f_i(x_i,\nu(x))$. Let $N=2$, $n_i=1$, $\psi_1=\psi_2=Id$, $f_1=x_1^2+4\nu^2(x)$, and $f_2=(x_2-2)^2+4\nu^2(x)$, and in this case, $\nu(x)=(x_1+x_2)/2$. For distributed online optimization, the objective is to minimize $f=f_1+f_2$, and by $\nabla_{x_1} f=0$ and $\nabla_{x_2}f=0$, one can obtain the optimal decision $x_1=1.2$ and $x_2=-0.8$. On the other hand, for aggregative games, the aim is to minimize $f_i$ for agent $i$, respectively, and the optimality conditions are $\nabla_{x_1}f_1=0$ and $\nabla_{x_2}f_2=0$, which lead to the Nash equilibrium $x_1=-2/3$ and $x_2=4/3$. Apparently, the Nash equilibrium is different from the optimal decision $x_1=1.2$ and $x_2=-0.8$, since all agents in aggregative games are selfish who generally cannot collaborate as in distributed online optimization.
\end{example}

To move on, for notation simplicity, let $\nabla_1 f_{i,t}(x_i,\nu(x))$ and $\nabla_2 f_{i,t}(x_i,\nu(x))$ respectively denote $\nabla_{x_i}f_{i,t}(x_i,\nu(x))$ and $\nabla_{\nu}f_{i,t}(x_i,\nu(x))$ for all $i\in[N]$. And for $x\in\mathbb{R}^n$ and $y=col(y_1,\ldots,y_N)\in\mathbb{R}^{Nd}$, define $f_t(x,y):=\sum_{i=1}^N f_{i,t}(x_i,y_i)$, $\nabla_1 f_t(x,y):=col(\nabla_1 f_{1,t}(x_1,y_1),\ldots,\nabla_1 f_{N,t}(x_N,y_N))$ and $\nabla_2 f_t(x,y):=col(\nabla_2 f_{1,t}(x_1,y_1),\ldots,\nabla_2 f_{N,t}(x_N,y_N))$.

To proceed, it is necessary to postulate some standard conditions for the regret analysis.

\begin{assumption}\label{a1}
The following hold for problem (\ref{1}):
\begin{enumerate}
  \item $X_i$'s are nonempty, convex and compact, that is, there exists a constant $B>0$ such that $\|x_i\|\leq B$ for all $x_i\in X_i$ and all $i\in[N]$;
  \item $f_t:\mathbb{R}^n\to\mathbb{R}$ is convex for all $t\geq 0$;
  \item $\nabla_1 f_t(x,y)$ and $\nabla_2 f_t(x,y)$ are uniformly $L_1$-Lipschitz continuous, i.e., $\|\nabla_1 f_t(x,y)-\nabla_1 f_t(x',y')\|\leq L_1(\|x-x'\|+\|y-y'\|)$ and $\|\nabla_2 f_t(x,y)-\nabla_2 f_t(x',y')\|\leq L_1(\|x-x'\|+\|y-y'\|)$ for all $x,x'\in X$, $y,y'\in\mathbb{R}^{Nd}$, and all $t\geq 0$;
  \item $\nabla_1 f_t(x,y), \nabla_2 f_t(x,y)$ and $\nabla\psi_i(x_i)$ are uniformly bounded by a constant $G>0$;
  \item $\nabla\psi_i$ is $L_2$-Lipschitz continuous, i.e., $\|\nabla\psi_i(x)-\nabla\psi_i(x')\|\leq L_2\|x-x'\|$ for any $x,x'\in X_i$ and all $i\in[N]$.
\end{enumerate}
\end{assumption}

It is worth pointing out that $f_{i,t}$'s are not necessary to be convex, instead it is sufficient for $f_t$ to be convex. Moreover, it should be noted that the convexity and compactness of $X_i$'s have been utilized in lots of existing works on (distributed) online optimization, such as \cite{zinkevich2003online,hazan2007logarithmic,shalev2012online,koppel2015saddle,li2018distributedon,yi2019distributed}, to just name a few.

\subsection{Graph Theory}\label{s2.1}

Each agent must send its information to its out-neighbors in order to solve the global problem (\ref{1}). The communication pattern among all agents is described by a simple time-varying graph, denoted by $\mathcal{G}_t=(\mathcal{V},\mathcal{E}_t)$ with the node/agent set $\mathcal{V}=\{1,\ldots,N\}$ and the edge set $\mathcal{E}_t\subset\mathcal{V}\times\mathcal{V}$. An edge $(j,i)\in\mathcal{E}_t$ means that agent $j$ can send information to agent $i$ at time step $t$, where $j$ (resp. $i$) is called an in-neighbor (resp. out-neighbor) of $i$ (resp. $j$). Denote by $\mathcal{N}_{i,t}=\{j:(j,i)\in\mathcal{E}_t\}$ the in-neighbor set of node $i$ at time $t$. The graph $\mathcal{G}_t$ is called undirected if and only if $(i,j)\in\mathcal{E}_t$ amounts to $(j,i)\in\mathcal{E}_t$ for all $t\geq 0$, and directed otherwise. The communication matrix $A=(a_{ij,t})\in\mathbb{R}^{N\times N}$ is defined by: $a_{ij,t}>0$ if $(j,i)\in\mathcal{E}_t$, and $a_{ij,t}=0$ otherwise.

A few frequently used assumptions in the literature are listed below.

\begin{assumption}\label{a2}
The following hold for the communication graphs:
\begin{enumerate}
  \item $\mathcal{G}_t$ is $Q$-strongly connected for a constant $Q>0$, i.e., the union graph $(\mathcal{V},\cup_{l=0,\ldots,Q-1}\mathcal{E}_{k+l})$ is strongly connected for all $k\geq 0$;
  \item $A_t$ is doubly stochastic, i.e., $\sum_{j=1}^N a_{ij,t}=1$ and $\sum_{i=1}^N a_{ij,t}=1$ for all $i,j\in[N]$;
  \item There exists a constant $a\in(0,1)$ such that $a_{ij}\geq a$ whenever $a_{ij}>0$, and $a_{ii}\geq a$ for all $i\in[N]$.
\end{enumerate}
\end{assumption}

\section{Main Results}\label{s3}

This section provides the proposed algorithms and theoretical analysis, including two parts: 1) the case with true gradients, and 2) the case with stochastic/noisy gradients.

\subsection{The Case with True Gradients}

To handle problem (\ref{1}), the centralized projected gradient descent algorithm can be given as
\begin{align}
x_{t+1}=P_X(x_t-\alpha_t\nabla f_t(x_t)),          \label{gd1}
\end{align}
where $x_t=col(x_{1,t},\ldots,x_{N,t})$ and $\alpha_t$ is the stepsize. For each agent $i\in[N]$, (\ref{gd1}) can be written as
\begin{align}
x_{i,t+1}&=P_{X_i}\Big[x_t-\alpha_t\Big(\nabla_1 f_{i,t}(x_{i,t},\nu(x_t))         \nonumber\\
&\hspace{0.8cm}+\nabla\psi_i(x_{i,t})\frac{\sum_{i=1}^N\nabla_2 f_{i,t}(x_{i,t},\nu(x_t))}{N}\Big)\Big].       \label{gd2}
\end{align}

However, it is easy to observe that $\nu(x_t)$ and $\frac{1}{N}\sum_{i=1}^N\nabla_2 f_{i,t}(x_{i,t},\nu(x_t))$ are global information, which cannot be accessed by any individual agent. Thus, auxiliary variables must be introduced to track $\nu(x_t)$ and $\frac{1}{N}\sum_{i=1}^N\nabla_2 f_{i,t}(x_{i,t},\nu(x_t))$ in a distributed manner. To do so, we introduce $\nu_{i,t}$ and $y_{i,t}$ to track $\nu(x_t)$ and $\frac{1}{N}\sum_{i=1}^N\nabla_2 f_{i,t}(x_{i,t},\nu(x_t))$, respectively, for each agent $i$. Then, (\ref{gd2}) can be modified as (\ref{7-1}). Moreover, inspired by the idea of gradient tracking \cite{lee2017sublinear,li2018distributedon}, the updates of $\nu_{i,t}$ and $y_{i,t}$ are given in (\ref{7-2}) and (\ref{7-3}), respectively.

The developed online distributed algorithm is shown in Algorithm 1. At time step $t+1\geq 1$, each agent makes a decision $x_{i,t+1}$ according to (\ref{7-1}), and the function $f_{i,t+1}$ will be then revealed to agent $i$ along with its true gradients, followed by the update of $\nu_{i,t+1}$ and $y_{i,t+1}$. Please note that the terms $\sum_{j=1}^N a_{ij,t}\nu_{j,t}$ and $\sum_{j=1}^N a_{ij,t}y_{j,t}$ involve only local information exchanges, that is, agent $i$ has used the information $\nu_{j,t}$ and $y_{j,t}$ received from its in-neighbors $\{j: j\in[N],a_{ij,t}\neq 0\}$.

\begin{algorithm}
 \caption{Online Distributed Gradient Tracking (O-DGT) with True Gradients}
 \begin{algorithmic}[1]
  \STATE \textbf{Initialization:} Stepsize $\alpha_t$ in (\ref{11}), and local initial conditions $x_{i,0}\in X_i$, $\nu_{i,0}=\psi_i(x_{i,0})$, and $y_{i,0}=\nabla_2 f_{i,0}(x_{i,0},\nu_{i,0})$ for all $i\in[N]$.
  \STATE \textbf{Iterations:} Step $t\geq 0$: update for each $i\in[N]$:
\begin{subequations}
\begin{align}
x_{i,t+1}&=P_{X_i}[x_{i,t}-\alpha_t (\nabla_1 f_{i,t}(x_{i,t},\nu_{i,t})             \nonumber\\
&\hspace{2.5cm}+\nabla\psi_i(x_{i,t})y_{i,t})],                                           \label{7-1}\\
\nu_{i,t+1}&=\sum_{j=1}^N a_{ij,t}\nu_{j,t}+\psi_i(x_{i,t+1})-\psi_i(x_{i,t}),            \label{7-2}\\
y_{i,t+1}&=\sum_{j=1}^N a_{ij,t}y_{j,t}+\nabla_2 f_{i,t+1}(x_{i,t+1},\nu_{i,t+1})            \nonumber\\
&\hspace{2.2cm}-\nabla_2 f_{i,t}(x_{i,t},\nu_{i,t}).                                           \label{7-3}
\end{align}\label{7}
\end{subequations}
 \end{algorithmic}
\end{algorithm}

We are now in a position to present the main result on Algorithm 1.

\begin{theorem}\label{t1}
Under Assumptions \ref{a1} and \ref{a2}, let $\alpha_0=1$ and
\begin{align}
\alpha_t=\frac{1}{\sqrt{t}},~~~~~\text{for}~t\geq 1,         \label{11}
\end{align}
then there holds
\begin{align}
\mathcal{R}_T=O(\sqrt{T})+O(V_{T,\alpha_t^{-1}}^p)+O(\max\{V_T^g,V_{T,\alpha_t}^g\}),              \label{12}
\end{align}
where
{\small\begin{align}
&V_{T,\alpha_t^{-1}}^p:=\sum_{t=1}^T\frac{1}{\alpha_t}\|x_{t+1}^*-x_t^*\|,        \label{5}\\
&\hspace{-0.1cm}V_T^g:=\sum_{t=1}^T\sum_{i=1}^N\max_{\substack{x_i\in X_i\\ z_i\in\mathbb{R}^d}}\|\nabla_2f_{i,t+1}(x_i,z_i)-\nabla_2f_{i,t}(x_i,z_i)\|,      \label{6}\\
&V_{T,\alpha_t}^g:=\sum_{t=1}^T\alpha_t\Big(\sum_{i=1}^N\max_{\substack{x_i\in X_i\\ z_i\in\mathbb{R}^d}}\|\nabla_2f_{i,t+1}(x_i,z_i)-\nabla_2f_{i,t}(x_i,z_i)\|\Big)^2,      \label{6b}
\end{align}}are called {\em $\alpha_t^{-1}$-weighted path variation}, {\em gradient variation}, and {\em $\alpha_t$-weighted squared gradient variation}, respectively.

\end{theorem}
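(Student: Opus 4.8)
The plan is to decompose the dynamic regret $\mathcal{R}_T$ into manageable pieces by inserting and subtracting a suitable "consensus trajectory." First I would introduce the averages $\bar{x}_t := \frac{1}{N}\sum_i x_{i,t}$ (or work directly with each agent's iterate) and use convexity of $f_t$ (Assumption \ref{a1}(2)) together with the characterization of projection \eqref{2} to bound $f_t(x_t) - f_t(x_t^*)$ by an inner-product term $\langle \text{gradient-like quantity}, x_t - x_t^*\rangle$ plus error terms. The crux is that the gradient actually used in \eqref{7-1} is $\nabla_1 f_{i,t}(x_{i,t},\nu_{i,t}) + \nabla\psi_i(x_{i,t})y_{i,t}$, whereas the "true" partial gradient of $f_t$ with respect to $x_i$ is $\nabla_1 f_{i,t}(x_{i,t},\nu(x_t)) + \nabla\psi_i(x_{i,t})\frac{1}{N}\sum_j \nabla_2 f_{j,t}(x_{j,t},\nu(x_t))$. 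So I would carry along two tracking-error quantities, $\|\nu_{i,t} - \nu(x_t)\|$ and $\|y_{i,t} - \frac{1}{N}\sum_j \nabla_2 f_{j,t}(x_{j,t},\nu(x_t))\|$, and control the mismatch using the Lipschitz bounds in Assumption \ref{a1}(3)–(5) and boundedness in Assumption \ref{a1}(4).

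Next I would establish the key tracking lemmas. Using Assumption \ref{a2} (double stochasticity and $Q$-strong connectivity), the recursions \eqref{7-2} and \eqref{7-3} are standard gradient-tracking recursions: summing \eqref{7-2} over $i$ shows $\frac{1}{N}\sum_i \nu_{i,t} = \nu(x_t)$ exactly for all $t$ (by the initialization $\nu_{i,0} = \psi_i(x_{i,0})$), and similarly $\frac{1}{N}\sum_i y_{i,t} = \frac{1}{N}\sum_i \nabla_2 f_{i,t}(x_{i,t},\nu_{i,t})$. The deviation from the average then contracts geometrically (over windows of length $Q$) under the mixing matrices, driven by the "input" increments $\|\psi_i(x_{i,t+1}) - \psi_i(x_{i,t})\|$ and $\|\nabla_2 f_{i,t+1}(x_{i,t+1},\nu_{i,t+1}) - \nabla_2 f_{i,t}(x_{i,t},\nu_{i,t})\|$. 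The first increment is $O(\alpha_t)$ because $x_{i,t+1} - x_{i,t} = O(\alpha_t)$ from \eqref{7-1} (bounded gradients plus nonexpansive projection), while the second splits into a time-variation part — exactly the per-step gradient-variation quantity appearing in \eqref{6}–\eqref{6b} — and an $O(\alpha_t)$ part from the movement of $(x_{i,t},\nu_{i,t})$. Chaining these, I expect $\sum_t \|\nu_{i,t} - \nu(x_t)\|$ and $\sum_t \|y_{i,t} - \overline{y}_t\|$ to be bounded by $O(\sum_t \alpha_t) + O(V_T^g)$ type expressions, possibly after a small-gain argument coupling the two recursions (the $y$-recursion's input depends on $\nu$-error via $\nabla_2 f$'s Lipschitz continuity, and vice versa through $x$-updates).

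Then I would assemble the regret bound. Plugging the tracking-error estimates into the per-step inequality, and using the standard telescoping identity $\langle x_t - x_t^*, \, x_t - x_{t+1}\rangle \le \frac{1}{2}\|x_t - x_t^*\|^2 - \frac{1}{2}\|x_{t+1} - x_t^*\|^2 + \frac{1}{2}\|x_t - x_{t+1}\|^2$ combined with the projection step, I get a telescoping sum in $\|x_t - x_t^*\|^2$ that, after the usual re-indexing, produces the comparator-drift term $\sum_t \frac{1}{\alpha_t}\|x_{t+1}^* - x_t^*\|$ times $O(B)$ — i.e., $V_{T,\alpha_t^{-1}}^p$ — plus $\sum_t \alpha_t$ (which is $O(\sqrt T)$ under \eqref{11}) times the bounded squared gradient $O(G^2)$, plus cross terms between the $O(\sqrt T)$-bounded projection residuals and the tracking errors. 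The $\alpha_t$-weighted squared gradient variation $V_{T,\alpha_t}^g$ enters when bounding these cross terms via Young's inequality (a term like $\sum_t \alpha_t \cdot (\text{gradient-variation}_t)^2$), while the unweighted $V_T^g$ enters from the part of the error bound that is not multiplied by a stepsize. Collecting everything yields \eqref{12}.

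The main obstacle I anticipate is the coupled small-gain analysis of the two tracking recursions \eqref{7-2}–\eqref{7-3} under \emph{time-varying} directed-type mixing: one must show that the combined error $e_t := \sum_i(\|\nu_{i,t}-\nu(x_t)\| + \|y_{i,t}-\overline{y}_t\|)$ obeys a stable recursion of the form $e_{t+Q} \le \rho\, e_t + c\sum_{s=t}^{t+Q-1}\alpha_s + c\sum_{s=t}^{t+Q-1}(\text{grad-var}_s)$ with $\rho<1$, which requires carefully tracking how the $\alpha_t$-sized and variation-sized inputs propagate through both recursions and back (the feedback loop: $x$-update error $\to$ $y$-input error and $\nu$-input error $\to$ next $x$-update error). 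Getting the constants and the summability bookkeeping right — especially ensuring the $\alpha_t$ vs. $\alpha_t^{-1}$ weightings land exactly on the path- and gradient-variation quantities as stated — is the delicate part; the rest is the by-now-routine machinery of dynamic-regret analysis for projected gradient methods.
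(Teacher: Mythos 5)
Your strategy matches the paper's proof essentially step for step: exact average tracking of $\nu_{i,t}$ and $y_{i,t}$ via double stochasticity and the initialization, geometric consensus-error bounds driven by $O(\alpha_t)$ increments plus the per-step gradient variation, and the projection/convexity telescoping that yields the $O(\sqrt{T})$, $V_{T,\alpha_t^{-1}}^p$, $V_T^g$, and $V_{T,\alpha_t}^g$ terms in exactly the roles you describe. The only simplification you missed is that no small-gain/feedback argument is needed: the uniform gradient bound in Assumption \ref{a1}.4 gives $\|y_t-\mathbf{1}_N\otimes\bar{y}_t\|\leq NB_1$ and hence $\|x_{t+1}-x_t\|=O(\alpha_t)$ a priori, so the $\nu$- and $y$-consensus errors are handled as a one-way cascade rather than a coupled loop.
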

\begin{proof}
The proof can be found in Appendix A.
\end{proof}

\begin{remark}\label{r2}
To our best knowledge, this paper is the first to investigate problem (\ref{1}) with an aggregative variable. An algorithm has been devised for handling this problem with guaranteed dynamic regret. Besides, it is well known that achieving a sublinear bound on the dynamic regret is impossible in the worst case, unless some regularity measure is introduced for the sequence of loss functions \cite{yang2016tracking}, which is why $V_{T,\alpha_t^{-1}}^p$, $V_T^g$ and $V_{T,\alpha_t}^g$ are introduced, all representing the difference of $f_t$'s or $f_{i,t}$'s. Here, the gradient variation $V_T^g$ and $V_{T,\alpha_t}^g$ are new terms needed to bound the dynamic regret, which is generally unnecessary for the case without the aggregative variable \cite{yi2019distributed}. The reason behind this phenomenon is that a sequence of global time-varying gradients $\frac{1}{N}\sum_{i=1}^N \nabla_2 f_{i,t}(x_{i,t},\nu_{i,t})$ are unavailable to all agents and thus need to be estimated by all agents, i.e., $y_{i,t}$'s. Notice also that $V_{T,\alpha_t^{-1}}^p\leq \sqrt{T}V_{T,1}^p$.
\end{remark}

\begin{remark}\label{ra1}
Note that all other parameters independent of $T$ have been omitted in Theorem \ref{t1}. In fact, some parameters pertinent to the communication graph can be established, that is, (\ref{12}) can be more specifically provided as
\begin{align}
\mathcal{R}_T&=O\Big(\frac{N^2\sqrt{N}B_1\gamma\xi}{1-\xi}\sqrt{T}\Big)+O\big(\sqrt{N}V_{T,\alpha_t^{-1}}^p\big)            \nonumber\\
&\hspace{0.4cm}+O\Big(\frac{\sqrt{N}\gamma\xi}{1-\xi}V_T^g\Big)+O\Big(\frac{\gamma^2}{(1-\xi^2)^2}V_{T,\alpha_t}^g\Big),          \label{rae1}
\end{align}
where $B_1:=N\gamma\max_{i\in[N]}\|y_{i,1}\|+\frac{2NG\gamma\xi}{1-\xi}+4G$, $\gamma:=\big(1-\frac{a}{2N^2}\big)^{-2}$, and $\xi:=\big(1-\frac{a}{2N^2}\big)^{\frac{1}{Q}}$.

\end{remark}

As a special case, let us consider the scenario where all $f_t$'s are time-invariant, i.e., $f_t=f$ for some function $f:\mathbb{R}^n\to\mathbb{R}$ for all $t\geq 0$. In this case, Algorithm 1 is renamed a distributed gradient tracking algorithm (DGT) with true gradients. Then the following convergence result can be concluded.
\begin{corollary}\label{c1}
For the case with $f_t=f$ for all $t\geq 0$, if Assumptions \ref{a1} and \ref{a2} hold, along with $\alpha_t$ given in (\ref{11}), then there holds
\begin{align}
f(\bar{x}_T)-f(x^*)=O\Big(\frac{1}{\sqrt{T}}\Big),              \label{c1-1}
\end{align}
where
\begin{align}
\bar{x}_T:=\frac{1}{T}\sum_{t=1}^T x_t,~~~~~x^*&:=\mathop{\arg\min}_{x\in X} f(x).                  \label{c1-2}
\end{align}
\end{corollary}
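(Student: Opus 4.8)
The plan is to reduce Corollary~\ref{c1} to Theorem~\ref{t1} by specializing to the time-invariant case $f_t=f$ and then converting the resulting dynamic-regret bound into a convergence rate via Jensen's inequality. First I would observe that when $f_t=f$ for all $t\ge 0$, the minimizer $x_t^*$ can be taken to be the same point $x^*=\arg\min_{x\in X}f(x)$ for every $t$, so the path variation collapses: $V_{T,\alpha_t^{-1}}^p=\sum_{t=1}^T\frac{1}{\alpha_t}\|x_{t+1}^*-x_t^*\|=0$. Likewise, since $\nabla_2 f_{i,t+1}=\nabla_2 f_{i,t}=\nabla_2 f_i$ for all $i$, both gradient-variation quantities vanish: $V_T^g=0$ and $V_{T,\alpha_t}^g=0$. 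Hence Theorem~\ref{t1}, with the stepsize $\alpha_0=1$, $\alpha_t=1/\sqrt t$ of~(\ref{11}), yields immediately
\begin{align}
\mathcal{R}_T=\sum_{t=1}^T f(x_t)-\sum_{t=1}^T f(x^*)=O(\sqrt T).          \nonumber
\end{align}

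Next I would divide by $T$ and apply Jensen's inequality. Since each $f_{i,t}=f_i$ and $f=\sum_{i=1}^N f_i(\cdot,\nu(\cdot))$ is convex by Assumption~\ref{a1}(2), and since $\bar x_T=\frac1T\sum_{t=1}^T x_t$ is a convex combination of the iterates, convexity of $f$ gives $f(\bar x_T)\le \frac1T\sum_{t=1}^T f(x_t)$. Subtracting $f(x^*)$ and using the displayed bound,
\begin{align}
f(\bar x_T)-f(x^*)\le \frac1T\sum_{t=1}^T\big(f(x_t)-f(x^*)\big)=\frac{\mathcal{R}_T}{T}=O\Big(\frac{1}{\sqrt T}\Big),          \nonumber
\end{align}
which is exactly~(\ref{c1-1}). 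One should also check feasibility: each $x_{i,t}\in X_i$ by the projection in~(\ref{7-1}) and the initialization $x_{i,0}\in X_i$, so $x_t\in X$ and, by convexity and closedness of $X$, $\bar x_T\in X$, so that $f(x^*)\le f(\bar x_T)$ and the left-hand side is genuinely nonnegative; this makes the $O(1/\sqrt T)$ bound meaningful as a convergence statement rather than a vacuous inequality.

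There is essentially no hard part here: the corollary is a routine specialization, and the only thing to be careful about is that the constants hidden in the $O(\sqrt T)$ of Theorem~\ref{t1} (made explicit in Remark~\ref{ra1} through $B_1$, $\gamma$, $\xi$, and the network parameters $N$, $a$, $Q$) do not depend on $T$, so that dividing by $T$ indeed produces a genuine $1/\sqrt T$ decay with a $T$-independent constant. One may remark that, as noted after Corollary~\ref{c1}, the analysis here covers the merely convex case, in contrast to the strongly convex treatment in~\cite{li2020distributed}; no strong convexity or smoothness beyond Assumption~\ref{a1} is used in this reduction, since all of the work has already been done in the proof of Theorem~\ref{t1} in Appendix~A. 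The same argument applied to the stochastic-gradient version of Theorem~\ref{t1} would, mutatis mutandis, give the in-expectation analogue $\mathbb{E}[f(\bar x_T)-f(x^*)]=O(1/\sqrt T)$.
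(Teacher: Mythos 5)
Your proposal is correct and follows essentially the same route as the paper's proof: the variation terms $V_{T,\alpha_t^{-1}}^p$, $V_T^g$, $V_{T,\alpha_t}^g$ vanish when $f_t=f$, Theorem \ref{t1} gives $\mathcal{R}_T=O(\sqrt{T})$, and convexity of $f$ (Jensen's inequality) converts $\mathcal{R}_T/T$ into the bound on $f(\bar{x}_T)-f(x^*)$. Your additional remarks on feasibility of $\bar{x}_T$ and the $T$-independence of the hidden constants are sound but not needed beyond what the paper states.
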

\begin{proof}
It is easy to see that $V_{T,\alpha_t^{-1}}^p$, $V_T^g$, and $V_{T,\alpha_t}^g$ vanish in this case. Therefore, by Theorem 1, one has that
\begin{align}
\mathcal{R}_T=O(\sqrt{T}).              \label{c1-4}
\end{align}
In view of the definition of $\mathcal{R}_T$, it can be obtained that
\begin{align}
\frac{\mathcal{R}_T}{T}&=\sum_{t=1}^T \frac{1}{T}f_(x_t)-f(x^*)\geq f(\bar{x}_T)-f(x^*),           \label{c1-5}
\end{align}
where the inequality has used the convexity of $f$. Combining (\ref{c1-4}) with (\ref{c1-5}) can yield (\ref{c1-1}), thus ending the proof.
\end{proof}

It should be noted that a diminishing stepsize (\ref{11}) has been leveraged in Theorem \ref{t1}, and thus the upper bound in (\ref{12}) is not optimal. As seen in \cite{zhao2018proximal,yang2016tracking}, the optimal bound on dynamic regret is $O(\sqrt{T})+O(\sqrt{T}V_{T,1}^p)$. Along this line, a better result is provided below when using a constant stepsize.
\begin{theorem}\label{tmm2}
Under Assumptions \ref{a1} and \ref{a2}, let
\begin{align}
\alpha_t=\sqrt{\frac{1+V_{T,1}^p}{T+V_{T,1}^g}},~~~~~\text{for}~t\geq 0,         \label{ccc1}
\end{align}
then there holds
\begin{align}
\mathcal{R}_T&=O(\sqrt{T})+O(\sqrt{TV_{T,1}^p})+O(V_T^g)+O(\sqrt{V_{T,1}^p V_{T,1}^g}),              \label{ccc2}
\end{align}
where $V_{T,1}^p$, $V_T^g$, and $V_{T,1}^g$ are defined in (\ref{5})-(\ref{6b}).
\end{theorem}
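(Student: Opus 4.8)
The plan is to reuse the regret decomposition that underlies Theorem~\ref{t1}, which before the substitution $\alpha_t=1/\sqrt t$ is valid for an arbitrary stepsize sequence, and then to specialise it to the constant choice $\alpha_t\equiv\alpha$ and optimise over $\alpha$. Two ingredients carry over. First, writing $g_t:=\nabla f_t(x_t)$ for the true aggregate gradient and $\hat g_t$ for the surrogate used in (\ref{7-1}) (which replaces $\nu(x_t)$ by $\nu_{i,t}$ and $\tfrac1N\sum_j\nabla_2 f_{j,t}$ by $y_{i,t}$), the projection step together with the nonexpansiveness (\ref{2}) and convexity of $f_t$ gives, for each $t$,
\begin{align}
f_t(x_t)-f_t(x_t^*) &\le \frac{1}{2\alpha}\big(\|x_t-x_t^*\|^2-\|x_{t+1}-x_t^*\|^2\big) \nonumber\\
&\quad +\frac{\alpha}{2}\|\hat g_t\|^2+\|g_t-\hat g_t\|\,\|x_t-x_t^*\|. \nonumber
\end{align}
Summing over $t$, the telescoping part produces an $\alpha^{-1}$-weighted path term and an $\alpha^{-1}$ residual from the initial distance, and since $\|x_t-x_t^*\|\le 2\sqrt N B$ by Assumption~\ref{a1}(1) this path term is $O(\alpha^{-1}V_{T,1}^p)$. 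Second, because (\ref{7-2})--(\ref{7-3}) are sum-preserving under the doubly stochastic $A_t$, one has $\tfrac1N\sum_i\nu_{i,t}=\nu(x_t)$ and $\tfrac1N\sum_i y_{i,t}=\tfrac1N\sum_i\nabla_2 f_{i,t}(x_{i,t},\nu_{i,t})=:\bar y_t$, so $\|g_t-\hat g_t\|$ reduces to the consensus deviations $\|\nu_t-\mathbf 1\otimes\nu(x_t)\|$ and $\|y_t-\mathbf 1\otimes\bar y_t\|$; Assumption~\ref{a2} makes the averaging contract at a geometric rate $\xi$, so, summed over $t$, these deviations are bounded by $O(\xi/(1-\xi))$ times the sum of the driving increments of each recursion plus a geometrically decaying initial term. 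The increments of (\ref{7-2}) are $\psi_i(x_{i,t+1})-\psi_i(x_{i,t})=O(\alpha)$, and those of (\ref{7-3}) split, via Assumption~\ref{a1}(3), into an $O(\alpha)$ piece and the piece $\max_{x_i,z_i}\|\nabla_2 f_{i,t+1}(x_i,z_i)-\nabla_2 f_{i,t}(x_i,z_i)\|$, whose sum over $i$ and $t$ is precisely $V_T^g$.

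Next I would assemble these with $\alpha_t\equiv\alpha$. The term $\|g_t-\hat g_t\|\,\|x_t-x_t^*\|$ summed over $t$ is $O(\alpha T)+O(V_T^g)$ (the unweighted $V_T^g$ is exactly where the gradient variation is forced to appear); and $\tfrac\alpha2\|\hat g_t\|^2\le O(\alpha)+O(\alpha\|y_t-\mathbf 1\otimes\bar y_t\|^2)$ by the gradient bound in Assumption~\ref{a1}(4), where $\sum_t\alpha\|y_t-\mathbf 1\otimes\bar y_t\|^2$, bounded by a Cauchy--Schwarz estimate of the geometric convolution of the squared increments, is $O(\alpha^3 T)+O(\alpha V_{T,1}^g)$. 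Thus, for constants $C_1,\dots,C_3$ depending only on $N,a,Q,B,G,L_1,L_2$ (equivalently on $\gamma,\xi$),
\begin{align}
\mathcal R_T\le\frac{C_1(1+V_{T,1}^p)}{\alpha}+C_2\,\alpha\,(T+V_{T,1}^g)+C_3\,V_T^g, \nonumber
\end{align}
after absorbing the residual $\alpha^3 T$ and $\alpha$ terms into the second term using $\alpha\le 1$, which may be assumed without loss: if $1+V_{T,1}^p\ge T+V_{T,1}^g$ then $\sqrt{TV_{T,1}^p}\ge\sqrt{T(T-1)}$ already dominates the trivial $O(T)$ bound on $\mathcal R_T$, so (\ref{ccc2}) holds for free in that regime.

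It then remains to optimise over $\alpha$. Choosing $\alpha=\sqrt{(1+V_{T,1}^p)/(T+V_{T,1}^g)}$ as in (\ref{ccc1}) balances the first two terms, each becoming a constant multiple of $\sqrt{(1+V_{T,1}^p)(T+V_{T,1}^g)}$; expanding the product and using $\sqrt{a+b+c+d}\le\sqrt a+\sqrt b+\sqrt c+\sqrt d$ bounds this by $O(\sqrt T)+O(\sqrt{V_{T,1}^g})+O(\sqrt{TV_{T,1}^p})+O(\sqrt{V_{T,1}^pV_{T,1}^g})$. Finally, since every increment $\max_{x_i,z_i}\|\nabla_2 f_{i,t+1}-\nabla_2 f_{i,t}\|\le 2G$ one has $V_{T,1}^g\le 2NG\,V_T^g$, hence $\sqrt{V_{T,1}^g}=O(\sqrt{V_T^g})=O(\sqrt T)+O(V_T^g)$; substituting back and combining with the $C_3 V_T^g$ term from the master bound gives exactly (\ref{ccc2}).

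The step I expect to be the real work is the assembly one: carefully separating, inside the constant-stepsize decomposition, which contributions of the $\nu_{i,t}$ and $y_{i,t}$ tracking errors enter unweighted (yielding $V_T^g$) and which enter with an $\alpha$ factor (yielding $\alpha V_{T,1}^g$), and checking that every leftover cross and residual term — in particular the ones of order $\alpha^3 T$ coming from squaring the $O(\alpha)$ driving increments — is genuinely dominated. The first step is essentially a re-parametrisation of the Theorem~\ref{t1} argument, and the last is elementary algebra.
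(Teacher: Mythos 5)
Your proposal is correct and takes essentially the same route as the paper: re-run the Theorem~\ref{t1} analysis with a constant stepsize $\alpha$ to obtain the master bound $O(1/\alpha)+O(\alpha T)+O(V_{T,1}^p/\alpha)+O(V_T^g)+O(\alpha V_{T,1}^g)$, and then substitute the choice (\ref{ccc1}). You in fact supply details the paper leaves implicit (absorbing the $\alpha^3 T$ residual using $\alpha\le 1$ together with the trivial-regret regime $1+V_{T,1}^p\ge T+V_{T,1}^g$, and converting the $\sqrt{V_{T,1}^g}$ term into $O(\sqrt{T})+O(V_T^g)$ via $V_{T,1}^g\le 2NG\,V_T^g$), so the argument is sound.
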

\begin{proof}
When setting $\alpha_t=\alpha$ for all $t\geq 0$, where $\alpha>0$ is a constant, invoking the same arguments as that of Theorem~\ref{t1} can yield that
\begin{align}
\mathcal{R}_T&=O\Big(\frac{1}{\alpha}\Big)+O(\alpha T)+O\Big(\frac{V_{T,1}^p}{\alpha}\Big)+O(V_T^g)+O(\alpha V_{T,1}^g).                             \nonumber
\end{align}
By choosing $\alpha=\sqrt{(1+V_{T,1}^p)/(T+V_{T,1}^g)}$, the result in (\ref{ccc2}) can be directly obtained.
\end{proof}

\begin{remark}\label{ra2}
Note that distributed algorithms are studied in this paper. The bound in Theorem \ref{tmm2} is almost as good as the centralized algorithms in \cite{zhao2018proximal,yang2016tracking}, but a drawback is the requirement of knowing $T$, $V_{T,1}^p$, and $V_{T,1}^g$ beforehand, which also appears in \cite{zhao2018proximal,yang2016tracking}. In comparison, the stepsize in Theorem~\ref{t1} does not require any knowledge of $T$, $V_{T,1}^p$, and $V_{T,1}^g$, but at the cost of a more conservative regret bound.
\end{remark}

\subsection{The Case with Stochastic Gradients}

In this subsection, true gradients in Algorithm 1 are replaced with stochastic ones, which can be obtained by mini-batch samples. In this case, denote by $\tilde{\nabla}$ the stochastic gradient. The algorithm is given in Algorithm 2.

To proceed, it is standard to list a few assumptions on stochastic gradients, i.e., unbiased gradients and bounded variances.
\begin{assumption}\label{a3}
There exist constants $\sigma_1,\sigma_2>0$ such that for all $t\geq 0$,
\begin{align}
&\mathbb{E}[\tilde{\nabla}_1 f_{i,t}(x_{i,t},\nu_{i,t})|x_{i,t},\nu_{i,t}]=\nabla_1 f_{i,t}(x_{i,t},\nu_{i,t}),         \label{14}\\
&\mathbb{E}[\tilde{\nabla}\psi_i(x_{i,t})|x_{i,t}]=\nabla\psi_i(x_{i,t}),                                     \label{15}\\
&\mathbb{E}[\tilde{\nabla}_2 f_{i,t}(x_{i,t},\nu_{i,t})|x_{i,t},\nu_{i,t}]=\nabla_2 f_{i,t}(x_{i,t},\nu_{i,t}),         \label{16}\\
&\mathbb{E}[\|\tilde{\nabla}-\nabla\|^2|x_{i,t},\nu_{i,t}]\leq \sigma_1^2,                                  \label{17}\\
&\mathbb{E}[\|\tilde{\nabla}_2-\nabla_2\|^2|x_{i,t},\nu_{i,t}]\leq \sigma_2^2,                                          \label{17b}
\end{align}
where $\tilde{\nabla}$ stands for the stochastic gradients in (\ref{14}) and (\ref{15}), $\nabla$ is the corresponding true gradient, and $\tilde{\nabla}_2,\nabla_2$ denote $\tilde{\nabla}_2 f_{i,t}(x_{i,t},\nu_{i,t})$ and $\nabla_2 f_{i,t}(x_{i,t},\nu_{i,t})$, respectively.
\end{assumption}

\begin{algorithm}
 \caption{O-DGT with Stochastic Gradients}
 \begin{algorithmic}[1]
  \STATE \textbf{Initialization:} Stepsize $\alpha_t$ in (\ref{11}), and local initial conditions $x_{i,0}\in X_i$, $\nu_{i,0}=\psi_i(x_{i,0})$, and $y_{i,0}=\tilde{\nabla}_2 f_{i,0}(x_{i,0},\nu_{i,0})$ for all $i\in[N]$.
  \STATE \textbf{Iterations:} Step $t\geq 0$: update for each $i\in[N]$ by (\ref{7}) with true gradients $\nabla_1 f_{i,t}$, $\nabla\psi_i$, $\nabla_2 f_{i,t+1}$, and $\nabla_2 f_{i,t}$ being replaced with stochastic gradients $\tilde{\nabla}_1 f_{i,t}$, $\tilde{\nabla}\psi_i$, $\tilde{\nabla}_2 f_{i,t+1}$, and $\tilde{\nabla}_2 f_{i,t}$, respectively.
 \end{algorithmic}
\end{algorithm}

In this scenario, the dynamic regret in (\ref{4}) should be redefined in the sense of expectation, i.e., $\mathbb{E}(\mathcal{R}_T)$.

It is now ready to present the main result on Algorithm 2.
\begin{theorem}\label{t2}
Under Assumptions \ref{a1}-\ref{a3}, let $\alpha_t$ be the same as in Theorem \ref{t1}, then there holds
\begin{align}
\hspace{-0.1cm}\mathbb{E}(\mathcal{R}_T)=O(\sqrt{T})+O(V_{T,\alpha_t^{-1}}^p)+O(\max\{V_T^g,V_{T,\alpha_t}^g\}).              \nonumber
\end{align}
\end{theorem}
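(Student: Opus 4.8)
The plan is to retrace, estimate by estimate, the proof of Theorem~\ref{t1} in Appendix~A, now carrying conditional expectations through everything. First I would fix a filtration $\{\mathcal{F}_t\}$ containing all randomness generated up to and including the formation of $y_{i,t}$ at iteration $t$, so that $x_{i,t},\nu_{i,t},y_{i,t}$ are $\mathcal{F}_t$-measurable while, by Assumption~\ref{a3}, the stochastic gradients $\tilde\nabla_1 f_{i,t}(x_{i,t},\nu_{i,t})$, $\tilde\nabla\psi_i(x_{i,t})$ used in the $x$-update at iteration $t$ are conditionally unbiased with conditionally bounded second moments given $\mathcal{F}_t$. I would then write each stochastic gradient as its true value plus a zero-mean error, e.g. $\tilde\nabla_1 f_{i,t}(x_{i,t},\nu_{i,t})=\nabla_1 f_{i,t}(x_{i,t},\nu_{i,t})+\epsilon_{i,t}^{(1)}$, $\tilde\nabla\psi_i(x_{i,t})=\nabla\psi_i(x_{i,t})+\epsilon_{i,t}^{(\psi)}$, $\tilde\nabla_2 f_{i,t}(x_{i,t},\nu_{i,t})=\nabla_2 f_{i,t}(x_{i,t},\nu_{i,t})+\epsilon_{i,t}^{(2)}$, with $\mathbb{E}[\epsilon_{i,t}^{(\bullet)}\mid\mathcal{F}_t]=0$ and conditional second moments bounded by $\sigma_1^2$ (for the $(1)$ and $(\psi)$ errors) and $\sigma_2^2$ (for the $(2)$ error).

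Next I would revisit the two tracking-error bounds from Appendix~A. Since $A_t$ is doubly stochastic, the average $\bar y_t:=\frac1N\sum_{i=1}^N y_{i,t}$ still telescopes to $\frac1N\sum_{i=1}^N\tilde\nabla_2 f_{i,t}(x_{i,t},\nu_{i,t})$, so $\mathbb{E}[\bar y_t\mid\mathcal{F}_t]=\frac1N\sum_{i=1}^N\nabla_2 f_{i,t}(x_{i,t},\nu_{i,t})$ with conditional variance at most $\sigma_2^2/N$. For the consensus error $y_{i,t}-\bar y_t$ I would apply the same geometric-mixing estimate for $Q$-strongly connected doubly stochastic chains (the $\xi,\gamma$ constants of Remark~\ref{ra1}) to the increments $\tilde\nabla_2 f_{i,t+1}(x_{i,t+1},\nu_{i,t+1})-\tilde\nabla_2 f_{i,t}(x_{i,t},\nu_{i,t})$, bounding the second moment of each increment via the Lipschitz continuity in Assumption~\ref{a1} (giving $\|x_{t+1}-x_t\|$ and $\|\nu_{t+1}-\nu_t\|$ terms of order $\alpha_t$ plus a $\psi$-variation), the gradient variation entering $V_T^g$ (the $f_{i,t+1}$ versus $f_{i,t}$ difference), and the extra variance $2\sigma_2^2$. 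The $\nu_{i,t}$ tracking error is unchanged from the deterministic case since (\ref{7-2}) contains no stochastic term. The upshot is that every term of the deterministic tracking-error bound reappears, together with additional noise terms of size $O(\alpha_t\sigma_2)$ or $O(\alpha_t^2\sigma_2^2)$ per iteration, and $\mathbb{E}\|y_{i,t}\|^2$ remains bounded uniformly in $t$.

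I would then feed these bounds into the per-step progress inequality. From nonexpansiveness of the projection one gets, exactly as in Appendix~A, $\|x_{t+1}-x_t^*\|^2\le\|x_t-x_t^*\|^2-2\alpha_t\langle g_t,x_t-x_t^*\rangle+\alpha_t^2\|\tilde g_t\|^2$, where $\tilde g_t$ is the stochastic search direction in (\ref{7-1}) and $g_t:=\mathbb{E}[\tilde g_t\mid\mathcal{F}_t]$ is its conditional mean (which, by the filtration choice, equals the deterministic direction evaluated at the current iterates). Decomposing $\langle\tilde g_t,x_t-x_t^*\rangle=\langle g_t,x_t-x_t^*\rangle+\langle\tilde g_t-g_t,x_t-x_t^*\rangle$ and using that $x_t-x_t^*$ is $\mathcal{F}_t$-measurable, the cross term vanishes under $\mathbb{E}[\cdot\mid\mathcal{F}_t]$; meanwhile $\mathbb{E}[\|\tilde g_t\|^2\mid\mathcal{F}_t]\le\|g_t\|^2+O(1)$ by Assumptions~\ref{a1} and \ref{a3} together with the boundedness of $y_{i,t}$, so the last term is $O(\alpha_t^2)$. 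Relating $\langle g_t,x_t-x_t^*\rangle$ to $f_t(x_t)-f_t(x_t^*)$ by convexity and the tracking errors (weighted by $L_1,L_2$, producing the $V_T^g$ and $V_{T,\alpha_t}^g$ contributions) is then identical to the deterministic argument. Taking total expectations, summing over $t=1,\dots,T$ and telescoping $\mathbb{E}\|x_t-x_t^*\|^2$ — using $\|x_i\|\le B$ from Assumption~\ref{a1} to pass from $\|x_{t+1}^*-x_t^*\|$ to the path variation — yields $O(V_{T,\alpha_t^{-1}}^p)$; the tracking-error terms give $O(\max\{V_T^g,V_{T,\alpha_t}^g\})$; and every newly introduced noise term is of the form $c\sum_{t=1}^T\alpha_t=O(\sqrt{T})$ or $c\sum_{t=1}^T\alpha_t^2=O(\log T)$, hence absorbed into $O(\sqrt{T})$. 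This gives the claimed bound.

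The main obstacle is the middle step: propagating the noise through the gradient-tracking recursion (\ref{7-3}), because $\epsilon_{i,t+1}^{(2)}$ perturbs $y_{i,t}$ at every iteration and one must verify that this perturbation is damped by the geometric contraction of the averaging, so that the extra variance enters only with a fixed summable geometric weight multiplied by stepsize factors $\alpha_t$ — rather than accumulating linearly in $T$. A secondary technical point is choosing the filtration consistently, since the $x$-update and the $y$-update at iteration $t$ evaluate stochastic gradients at distinct iterates ($x_{i,t}$ and $x_{i,t+1}$), which must be reflected in the conditioning $\sigma$-algebras.
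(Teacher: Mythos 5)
Your proposal follows essentially the same route as the paper: the paper's Appendix~B simply re-derives Lemmas~\ref{l1}--\ref{l3} in expectation (Lemmas~\ref{l4}--\ref{l6}, using the bound $\mathbb{E}(\|\tilde{\nabla}\|\,|\,x_{i,t})\leq G+\sigma_\sharp$ and picking up extra $\sigma_2^2\sum_t\alpha_t$-type variance terms) and then reruns the Theorem~\ref{t1} argument under total expectation, exactly as you outline, with the zero-mean cross terms dropping out and the noise contributions absorbed into $O(\sqrt{T})$. Your plan is in fact more explicit than the paper's own (largely omitted) proof; just make the filtration bookkeeping consistent, since as written $\bar{y}_t$ is $\mathcal{F}_t$-measurable yet you also assert $\mathbb{E}[\bar{y}_t\mid\mathcal{F}_t]=\frac{1}{N}\sum_{i=1}^N\nabla_2 f_{i,t}(x_{i,t},\nu_{i,t})$, which requires conditioning on the $\sigma$-algebra generated before the time-$t$ draws of $\tilde{\nabla}_2$.
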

\begin{proof}
The proof can be found in Appendix B.
\end{proof}

\begin{remark}\label{r3}
It is worth mentioning that the result in Theorem \ref{t2} is the same as in the full gradient case. In addition, the similar bound to (\ref{rae1}) can also be derived by similar arguments, but some constants omitted in $O(\cdot)$ are proportional to $\sigma_1,\sigma_1\sigma_2$, and $\sigma_2^2$ as well. Moreover, similar to Theorem \ref{tmm2}, the same bound (\ref{ccc2}) can be obtained when applying the constant stepsize (\ref{ccc1}).
\end{remark}

To end this section, a similar result to Corollary \ref{c1} can be obtained for the time-invariant case, as shown below, where Algorithm 2 becomes a distributed gradient tracking algorithm (DGT) with stochastic gradients.

\begin{corollary}\label{c2}
For the case with $f_t=f$ for all $t\geq 0$, if Assumptions \ref{a1}-\ref{a3} hold, and $\alpha_t$ is the same as in Theorem \ref{t1}, then there holds
\begin{align}
\mathbb{E}[f(\bar{x}_T)]-f(x^*)=O\Big(\frac{1}{\sqrt{T}}\Big),              \label{c2-1}
\end{align}
where $\bar{x}_T$ and $x^*$ are defined in (\ref{c1-2}).
\end{corollary}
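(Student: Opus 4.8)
The plan is to mimic the argument of Corollary~\ref{c1} but carry the expectation through. First I would observe that when $f_t=f$ for all $t\geq 0$, the three regularity measures $V_{T,\alpha_t^{-1}}^p$, $V_T^g$, and $V_{T,\alpha_t}^g$ all vanish identically, since $x_{t+1}^*=x_t^*=x^*$ and $\nabla_2 f_{i,t+1}=\nabla_2 f_{i,t}$ for every $i$ and $t$. Consequently, by Theorem~\ref{t2} (with the stepsize of Theorem~\ref{t1}), the expected dynamic regret collapses to
\begin{align}
\mathbb{E}(\mathcal{R}_T)=\mathbb{E}\Big[\sum_{t=1}^T f(x_t)\Big]-T f(x^*)=O(\sqrt{T}).                 \nonumber
\end{align}

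Next I would divide by $T$ and invoke Jensen's inequality together with the convexity of $f$ (Assumption~\ref{a1}(2)): since $\bar{x}_T=\frac{1}{T}\sum_{t=1}^T x_t$ is a convex combination of the iterates, $f(\bar{x}_T)\leq \frac{1}{T}\sum_{t=1}^T f(x_t)$ pointwise along every sample path, and taking expectations preserves this, so
\begin{align}
\mathbb{E}[f(\bar{x}_T)]\leq \frac{1}{T}\mathbb{E}\Big[\sum_{t=1}^T f(x_t)\Big].                 \nonumber
\end{align}
Subtracting $f(x^*)$ from both sides and using the $O(\sqrt{T})$ bound on $\mathbb{E}(\mathcal{R}_T)$ yields $\mathbb{E}[f(\bar{x}_T)]-f(x^*)\leq \mathbb{E}(\mathcal{R}_T)/T=O(1/\sqrt{T})$, which is exactly (\ref{c2-1}). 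I would also note that $\bar{x}_T\in X$ because $X$ is convex and each $x_t\in X$, so $f(x^*)\leq f(\bar{x}_T)$ and the bound is two-sided in the trivial direction, confirming the claimed rate is tight up to constants.

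One subtlety worth a sentence is measurability: $\bar{x}_T$ depends on the random gradient samples, so $f(\bar{x}_T)$ is a genuine random variable and the Jensen step must be applied path-wise before taking expectations — which is legitimate since the inequality $f(\bar{x}_T)\leq \frac{1}{T}\sum_t f(x_t)$ holds deterministically on each realization. The main obstacle, such as it is, is essentially nonexistent: all the heavy lifting has already been done in Theorem~\ref{t2}, and the only real content here is the reduction of the regularity terms to zero plus the standard convexity-averaging trick. If anything, the delicate point is making sure that the stepsize choice $\alpha_t=1/\sqrt{t}$ still produces the $O(\sqrt{T})$ leading term once the variation terms drop out, but this is immediate from the form of Theorem~\ref{t2}'s bound.
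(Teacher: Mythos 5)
Your proposal is correct and follows exactly the route the paper intends: the paper's own (omitted) proof is just "Theorem \ref{t2} plus the argument of Corollary \ref{c1}", i.e., the variation terms vanish, $\mathbb{E}(\mathcal{R}_T)=O(\sqrt{T})$, and path-wise convexity of $f$ applied to the average $\bar{x}_T$ before taking expectations. Your additional remarks on measurability and on $\bar{x}_T\in X$ are harmless elaborations of the same argument.
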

\begin{proof}
This corollary is a direct implication of Theorem \ref{t2} and the argument of Corollary \ref{c1}, and it is thus omitted.
\end{proof}

\section{A Numerical Example}\label{s4}

This section aims at providing a numerical example to corroborate the obtained theoretical results. In doing so, motivated by a simple synthetic robotics reactive control task in \cite{sun2017safety}, let us consider a target surrounding problem for robots in the plane, where there are $N$ robots (or agents), whose purpose is to protect a target, denoted by $x_0(t)$, by surrounding this target in order to avoid the attack from $M$ intruders. Also, each agent is only aware of some intruders, instead of all intruders.

For this problem, let $X_i=\mathbb{R}^2$, $N=M=50$, $Q=4$, and $\psi_i=Id$ for all $i\in[N]$. Without loss of generality, assume that agent $i\in[N]$ is only aware of the intruder $i\in[M]$, as shown in Fig. \ref{f0}. In this case, after each agent $i\in[N]$ decides to move to the position $x_{i,t}$ at time step $t\geq 0$, a loss will be incurred for agent $i$ proportional to the distance from $x_{i,t}$ to the intruder $i$ and the distance from the average position $\nu(x_t)$ of all agents to the target $x_0(t)$, i.e., $f_{i,t}(x_{i,t},\nu(x_t))=\|x_{i,t}-z_i(t)\|+\|\nu(x_t)-x_0(t)\|$, where $x_t=col(x_{1,t},\ldots,x_{N,t})$ and $z_i(t)$ represents the $i$-th intruder for all $i\in[M]$.

In the numerical simulation, set $x_0(t)=col(10,10)+1/(t+1)\cdot col(1,1)$, and $z_i(t)=col(10,10)+6\cdot col(\sin(t),\cos(t))+1/(t+1)\cdot col(1,1)$ for all $i\in[M]$. By running the developed Algorithms 1 and 2, the evolutions of dynamic regret are plotted in Fig. \ref{f1}, from which one can observe that the dynamic regret with true gradients, i.e., Algorithm 1, decreases faster than the expected dynamic regret with stochastic gradients, i.e., Algorithm 2, where gradients are stochastic with $\sigma_1^2=\sigma_2^2=0.1$. Meanwhile, the (expected)

\begin{figure}[H]
\centering
\includegraphics[width=1.9in]{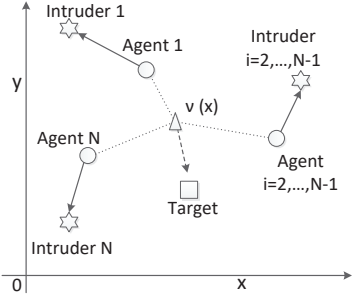}
\caption{Schematic illustration for the target surrounding problem, where solid arrows mean that agents are aware of the pointed intruders and the triangle represents the average $\nu(x)$ that aims to track the target in order to protect it from intruders' attack in the sense of almost surrounding it by all agents.}
\label{f0}
\end{figure}
\vspace{-0.5cm}
\begin{figure}[H]
\centering
\includegraphics[width=3.0in]{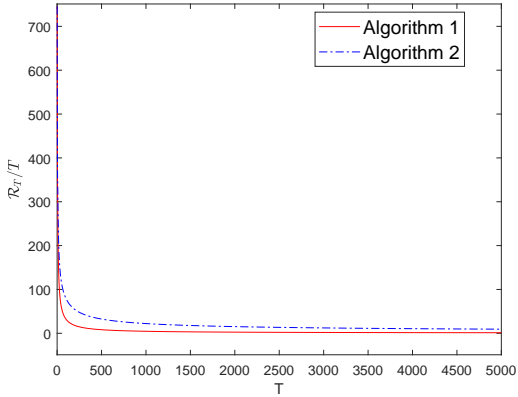}
\caption{Evolutions of $\mathcal{R}_T/T$ and $\mathbb{E}(\mathcal{R}_T)/T$ for Algorithms 1 and 2, respectively.}
\label{f1}
\end{figure}dynamic regrets for both cases in Fig. \ref{f1} tend to decrease asymptotically. In summary, the numerical simulations support the theoretical results on the proposed algorithms.

\section{Conclusion}\label{s5}

This paper studied distributed online convex optimization with an aggregative variable over a multi-agent network consisting of $N$ agents, where each agent must make a decision based on its own partial information on its local loss function and decision variable. All agents are required to cooperate by local information exchange among neighboring agents in order to tackle a global decision making problem at each time instant. To our best knowledge, this paper is the first to consider the dependency on an aggregative variable for all local loss functions, and the aggregative variable is unavailable to all agents. To cope with this problem, both true and stochastic/noisy gradients were taken into account, for which a novel algorithm, called online distributed gradient tracking (O-DGT), was developed based on true or stochastic gradients. It was shown that the dynamic regret is sublinear when some path and gradient variation terms are all sublinear in both the true and stochastic gradient cases. A numerical simulation was also provided to corroborate the developed algorithms. Future research directions can be placed on addressing unbalanced communication graphs (i.e., $A_t$ in Assumption \ref{a2} is only row- or column-stochastic) and asynchronous algorithms.

%

\section*{Appendix}

To facilitate the ensuing analysis, it is helpful to introduce some notations. For a vector $x=col(x_1,\ldots,x_N)\in\mathbb{R}^n$, let us define $\psi(x):=col(\psi_1(x_1),\ldots,\psi_N(x_N))$. For a vector-valued differentiable function $g(x)=col(g_1(x),\ldots,g_m(x))$, where $g_i$ is a real-valued function for all $i\in[m]$, denote $\nabla g(x)=(\nabla g_1(x),\ldots,\nabla g_m(x))$. Also, denote $P_X(z)=col(P_{X_1}(z_1),\ldots,P_{X_N}(z_N))$ for a vector $z=col(z_1,\ldots,z_N)\in\mathbb{R}^n$.

With the above notations, algorithm (\ref{7}) can be written in a compact form as
\begin{align}
&x_{t+1}=P_X[x_t-\alpha_t(\nabla_1 f_t(x_t,\nu_t)+\nabla\psi(x_t)y_t)],            \label{8}\\
&\nu_{t+1}=\mathcal{A}_t\nu_t+\psi(x_{t+1})-\psi(x_t),                               \label{9}\\
&\hspace{-0.2cm}y_{t+1}=\mathcal{A}_ty_t+\nabla_2f_{t+1}(x_{t+1},\nu_{t+1})-\nabla_2f_t(x_t,\nu_t),       \label{10}
\end{align}
where$\mathcal{A}_t:=(A_t\otimes I_d)$, $x_t:=col(x_{1,t},\ldots,x_{N,t})$ ($\nu_t$ and $y_t$ are similarly defined), and $\nabla_1f_t,\nabla_2f_t$ are defined in the paragraph after Example \ref{e1}.

\subsection{Proof of Theorem \ref{t1}}

To begin with, several lemmas are first provided.

\begin{lemma}\label{l1}
Under Assumption \ref{a2}, for all $t\geq 0$, there holds
\begin{align}
\bar{\nu}_t&:=\frac{1}{N}\sum_{i=1}^N \nu_{i,t}=\frac{1}{N}\sum_{i=1}^N \psi_i(x_{i,t})=\nu(x_t),        \label{ap1}\\
\bar{y}_t&:=\frac{1}{N}\sum_{i=1}^N y_{i,t}=\frac{1}{N}\sum_{i=1}^N\nabla_2 f_{i,t}(x_{i,t},\nu_{i,t}).       \label{ap2}
\end{align}
\end{lemma}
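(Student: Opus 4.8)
The plan is to prove both identities simultaneously by induction on $t\ge 0$, using only the column‑stochasticity half of Assumption \ref{a2}(2) together with the specific initialization in Algorithm 1. The base case $t=0$ is immediate from the initialization: $\nu_{i,0}=\psi_i(x_{i,0})$ gives $\bar\nu_0=\frac1N\sum_{i=1}^N\psi_i(x_{i,0})=\nu(x_0)$ by the definition (\ref{3}), and $y_{i,0}=\nabla_2 f_{i,0}(x_{i,0},\nu_{i,0})$ gives $\bar y_0=\frac1N\sum_{i=1}^N\nabla_2 f_{i,0}(x_{i,0},\nu_{i,0})$, which are exactly (\ref{ap1})–(\ref{ap2}) at $t=0$.

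For the inductive step, assume (\ref{ap1})–(\ref{ap2}) hold at time $t$. Summing the update (\ref{7-2}) over $i\in[N]$ and interchanging the order of summation, $\sum_{i=1}^N\sum_{j=1}^N a_{ij,t}\nu_{j,t}=\sum_{j=1}^N\big(\sum_{i=1}^N a_{ij,t}\big)\nu_{j,t}=\sum_{j=1}^N\nu_{j,t}$, where the last equality is precisely column‑stochasticity $\sum_{i=1}^N a_{ij,t}=1$. Hence $\sum_{i=1}^N\nu_{i,t+1}=\sum_{i=1}^N\nu_{i,t}+\sum_{i=1}^N\psi_i(x_{i,t+1})-\sum_{i=1}^N\psi_i(x_{i,t})$; dividing by $N$ and invoking the induction hypothesis $\bar\nu_t=\nu(x_t)$ together with (\ref{3}) gives $\bar\nu_{t+1}=\nu(x_t)+\nu(x_{t+1})-\nu(x_t)=\nu(x_{t+1})$. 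The identical manipulation applied to (\ref{7-3}) yields $\bar y_{t+1}=\bar y_t+\frac1N\sum_{i=1}^N\nabla_2 f_{i,t+1}(x_{i,t+1},\nu_{i,t+1})-\frac1N\sum_{i=1}^N\nabla_2 f_{i,t}(x_{i,t},\nu_{i,t})$, and the induction hypothesis $\bar y_t=\frac1N\sum_{i=1}^N\nabla_2 f_{i,t}(x_{i,t},\nu_{i,t})$ makes the first and third terms cancel, leaving $\bar y_{t+1}=\frac1N\sum_{i=1}^N\nabla_2 f_{i,t+1}(x_{i,t+1},\nu_{i,t+1})$. This closes the induction.

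Equivalently, one can argue directly from the compact forms (\ref{9})–(\ref{10}): left‑multiplying by $\frac1N(\mathbf 1^\top\otimes I_d)$ and using $(\mathbf 1^\top\otimes I_d)\mathcal A_t=(\mathbf 1^\top A_t)\otimes I_d=\mathbf 1^\top\otimes I_d$ (again column‑stochasticity) reduces (\ref{9})–(\ref{10}) to the scalar recursions above. There is essentially no obstacle here: the statement is the standard "conservation of the average" invariant underlying gradient tracking, and the only points that require care are that column‑stochasticity alone (not full double stochasticity) is what is actually used, and that the Kronecker factor $I_d$ must be carried along so that the averaging is performed componentwise in $\mathbb{R}^d$.
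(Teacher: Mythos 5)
Your proof is correct and follows essentially the same route as the paper: summing the updates (equivalently, left-multiplying the compact forms by $\tfrac{1}{N}(\mathbf 1^\top\otimes I_d)$), using column-stochasticity of $A_t$ to preserve the average, and invoking the initialization $\nu_{i,0}=\psi_i(x_{i,0})$, $y_{i,0}=\nabla_2 f_{i,0}(x_{i,0},\nu_{i,0})$; the paper phrases the induction as the invariance of $\bar\nu_t-\tfrac1N\sum_i\psi_i(x_{i,t})$, which is the same argument.
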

\begin{proof}
In light of column-stochasticity of $A_t$ in Assumption \ref{a2}, multiplying $\textbf{1}^\top/N$ on both sides of (\ref{9}) yields that
\begin{align}
\bar{\nu}_{t+1}=\bar{\nu}_t+\frac{1}{N}\sum_{i=1}^N \psi_i(x_{i,t+1})-\frac{1}{N}\sum_{i=1}^N \psi_i(x_{i,t}),          \nonumber
\end{align}
which further implies that
\begin{align}
\bar{\nu}_t-\frac{1}{N}\sum_{i=1}^N \psi_i(x_{i,t})=\bar{\nu}_0-\frac{1}{N}\sum_{i=1}^N \psi_i(x_{i,0}).       \nonumber
\end{align}
Note that $\nu_{i,0}=\psi_i(x_{i,0})$. Thus, $\bar{\nu}_0-\frac{1}{N}\sum_{i=1}^N \psi_i(x_{i,0})=0$, which together with the above equality leads to (\ref{ap1}).

The assertion (\ref{ap2}) can be similarly proved as above.
\end{proof}

\begin{lemma}\label{l2}
Under Assumptions \ref{a1} and \ref{a2}, there holds
\begin{align}
\|y_t-{\bf 1}_N\otimes \bar{y}_t\|&\leq NB_1,                              \label{ap3}\\
\|x_{t+1}-x_t\|&\leq G(1+G+NB_1)\alpha_t,~~~~\forall t\geq 0            \label{ap4}
\end{align}
where $B_1:=N\gamma\max_{i\in[N]}\|y_{i,1}\|+\frac{2NG\gamma\xi}{1-\xi}+4G$, and
\begin{align}
\gamma:=\big(1-\frac{a}{2N^2}\big)^{-2},~~~\xi:=\big(1-\frac{a}{2N^2}\big)^{\frac{1}{Q}}.           \label{ap5}
\end{align}
\end{lemma}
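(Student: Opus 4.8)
The plan is to prove the two bounds by standard gradient-tracking consensus arguments, exploiting the $Q$-strong connectivity and double stochasticity of $A_t$ from Assumption \ref{a2} together with the uniform gradient bounds in Assumption \ref{a1}. First I would establish a linear contraction for $\mathcal{A}_t$ acting on the consensus-orthogonal subspace: since each $A_t$ is doubly stochastic with positive diagonal bounded below by $a$ and the union over any $Q$ consecutive steps is strongly connected, the product $\mathcal{A}_{t+Q-1}\cdots\mathcal{A}_t$ contracts the ``disagreement'' component by a factor $1-\tfrac{a}{2N^2}$; writing this as a per-step rate gives exactly $\xi=(1-\tfrac{a}{2N^2})^{1/Q}$ with the transient constant $\gamma=(1-\tfrac{a}{2N^2})^{-2}$ absorbing the mismatch between per-step and per-block decay. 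This is the usual estimate $\|\mathcal{A}_{t}\cdots\mathcal{A}_s (I-\tfrac1N\mathbf{1}\mathbf{1}^\top\otimes I_d)\| \le \gamma \xi^{\,t-s+1}$.

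Next I would unroll the recursion (\ref{10}) for $y_t$. Subtracting $\mathbf{1}_N\otimes\bar y_t$ and using Lemma \ref{l1} (so $\bar y_t$ is the running average of the $\nabla_2 f_{i,t}$'s), the disagreement $y_t-\mathbf{1}_N\otimes\bar y_t$ is driven by the ``input'' term $\nabla_2 f_{t+1}(x_{t+1},\nu_{t+1})-\nabla_2 f_t(x_t,\nu_t)$ projected orthogonally to consensus. By Assumption \ref{a1}(4) each $\nabla_2 f_{i,t}$ is bounded by $G$, so each such increment has norm at most $2G\sqrt{N}$ (or $2GN$ after accounting for the dimension-$d$ blocks — I would be careful with the $N$ versus $\sqrt N$ bookkeeping to land on the stated constant). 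Summing the geometric series $\sum_{s\le t}\gamma\xi^{t-s}\cdot(\text{increment bound})$ produces the steady-state term $\tfrac{2NG\gamma\xi}{1-\xi}$, the initial-condition term $N\gamma\max_i\|y_{i,1}\|$ carries the effect of $y_1$, and the extra additive $4G$ accounts for the current-step terms that are not yet contracted; collecting these gives $\|y_t-\mathbf{1}_N\otimes\bar y_t\|\le NB_1$ with $B_1$ as defined. A small technical point is handling $t=0,1$ separately because $\alpha_0=1$ and the initialization $y_{i,0}=\nabla_2 f_{i,0}(x_{i,0},\nu_{i,0})$, but these contribute only $O(G)$ and are swallowed by the constant.

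Finally, for (\ref{ap4}) I would use nonexpansiveness of the projection (\ref{2}) applied to (\ref{8}): $\|x_{t+1}-x_t\| = \|P_X[x_t-\alpha_t(\nabla_1 f_t(x_t,\nu_t)+\nabla\psi(x_t)y_t)]-P_X[x_t]\| \le \alpha_t\|\nabla_1 f_t(x_t,\nu_t)+\nabla\psi(x_t)y_t\|$. Bounding $\|\nabla_1 f_t\|\le G$ and $\|\nabla\psi(x_t)y_t\|\le \|\nabla\psi(x_t)\|\,\|y_t\|\le G(\|y_t-\mathbf{1}_N\otimes\bar y_t\|+\|\mathbf{1}_N\otimes\bar y_t\|)\le G(NB_1+G\sqrt N)$ via Assumption \ref{a1}(4), Lemma \ref{l1}, and part (\ref{ap3}), I would collect terms to get the factor $G(1+G+NB_1)$ (again modulo $\sqrt N$ versus $N$ constants, which I would reconcile to match the statement). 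The main obstacle is the first step: getting the contraction constants $\gamma,\xi$ exactly right under a time-varying, merely $Q$-strongly-connected graph — this requires the standard but delicate block-product argument for doubly stochastic matrices, after which both bounds follow by routine geometric-series estimates.
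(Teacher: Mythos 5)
Your proposal is correct and follows essentially the same route as the paper: a perturbed-consensus (geometric-series) bound on the tracking disagreement $y_t-\mathbf{1}_N\otimes\bar y_t$ driven by the gradient increments $\nabla_2 f_{i,t+1}-\nabla_2 f_{i,t}$ (the paper simply cites Lemma~2 of \cite{lee2017sublinear} for the contraction constants $\gamma,\xi$ rather than re-deriving them), followed by projection nonexpansiveness and the uniform gradient bounds for (\ref{ap4}). The only bookkeeping point to settle is that the paper gets $\|\mathbf{1}_N\otimes\bar y_t\|\le G$ (not $G\sqrt N$) by writing $\|\mathbf{1}_N\otimes\bar y_t\|^2=\frac{1}{N}\|\sum_{i=1}^N\nabla_2 f_{i,t}\|^2\le\|\nabla_2 f_t(x_t,\nu_t)\|^2\le G^2$, using that Assumption~\ref{a1}.4 bounds the stacked gradient, which is exactly what yields the stated constant $G(1+G+NB_1)$.
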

\begin{proof}
The iteration (\ref{7-3}) can be rewritten as
\begin{align}
y_{i,t+1}=\sum_{j=1}^N a_{ij,t}y_{j,t}+\epsilon_{i,t+1}^y,            \label{ap6}
\end{align}
where $\epsilon_{i,t+1}^y:=\nabla_2 f_{i,t+1}(x_{i,t+1},\nu_{i,t+1})-\nabla_2 f_{i,t}(x_{i,t},\nu_{i,t})$. Invoking Lemma 2 in \cite{lee2017sublinear} for (\ref{ap6}) can lead to
\begin{align}
\|y_{i,t+1}-\bar{y}_{t+1}\|&\leq N\gamma\xi^t\max_{i\in[N]}\|y_{i,1}\|+\gamma\sum_{l=1}^{t-1}\xi^{t-l}\sum_{j=1}^N\|\epsilon_{j,l+1}^y\|         \nonumber\\
&\hspace{0.4cm}+\frac{1}{N}\sum_{j=1}^N \|\epsilon_{j,t+1}^y\|+\|\epsilon_{i,t+1}^y\|.         \label{ap7}
\end{align}

By invoking Assumption \ref{a1}.4, it can be obtained that
\begin{align}
\|\epsilon_{i,t+1}^y\|&\leq \|\nabla_2 f_{i,t+1}(x_{i,t+1},\nu_{i,t+1})\|+\|\nabla_2 f_{i,t}(x_{i,t},\nu_{i,t})\|       \nonumber\\
&\leq 2G.           \label{ap8}
\end{align}
In view of (\ref{ap7}) and (\ref{ap8}), one can obtain that $\|y_{i,t+1}-\bar{y}_{t+1}\|\leq B_1$, which further results in
\begin{align}
\|y_t-{\bf 1}_N\otimes\bar{y}_t\|\leq\sum_{i=1}^N\|y_{i,t}-\bar{y}_t\|\leq NB_1.         \nonumber
\end{align}

For (\ref{ap4}), with reference to (\ref{2}) and (\ref{8}), one has that
\begin{align}
\|x_{t+1}-x_t\|&\leq \alpha_t\|\nabla_1 f_t(x_t,\nu_t)+\nabla\psi(x_t)y_t\|         \nonumber\\
&\leq\alpha_t\|\nabla_1 f_t(x_t,\nu_t)\|          \nonumber\\
&\hspace{0.4cm}+\alpha_t\|\nabla\psi(x_t)\|\|{\bf 1}_N\otimes \bar{y}_t\|       \nonumber\\
&\hspace{0.4cm}+\alpha_t\|\nabla\psi(x_t)\|\|y_t-{\bf 1}_N\otimes \bar{y}_t\|.            \label{ap9}
\end{align}
Meanwhile, by (\ref{ap2}), it is easy to obtain that
\begin{align}
\|{\bf 1}_N\otimes \bar{y}_t\|^2&=\|{\bf 1}_N\otimes \frac{1}{N}\sum_{i=1}^N\nabla_2f_{i,t}(x_{i,t},\nu_{i,t})\|^2         \nonumber\\
&=\frac{1}{N}\|\sum_{i=1}^N\nabla_2f_{i,t}(x_{i,t},\nu_{i,t})\|^2        \nonumber\\
&\leq \sum_{i=1}^N\|\nabla_2f_{i,t}(x_{i,t},\nu_{i,t})\|^2               \nonumber\\
&=\|\nabla_2f_t(x_t,\nu_t)\|^2         \nonumber\\
&\leq G^2,                          \label{ap10}
\end{align}
where the first inequality has employed the fact $\|\sum_{i=1}^N z_i\|^2\leq N\sum_{i=1}^N \|z_i\|^2$ for any vectors $z_i$'s, and Assumption \ref{a1}.4 has been used in the last inequality. Applying (\ref{ap10}) and Assumption \ref{a1}.4 to (\ref{ap9}) can lead to the assertion (\ref{ap4}). This ends the proof.
\end{proof}

\begin{lemma}\label{l3}
Under Assumptions \ref{a1} and \ref{a2}, there holds
\begin{align}
&\sum_{t=1}^T\|\nu_t-{\bf 1}_N\otimes \bar{\nu}_t\|=O\Big(\frac{N\sqrt{N}B_1\gamma\xi}{1-\xi}\sum_{t=1}^T\alpha_t\Big),               \label{ap11}\\
&\sum_{t=1}^T\alpha_t\|\nu_t-{\bf 1}_N\otimes \bar{\nu}_t\|^2=O\Big(\frac{N^3B_1^2\gamma^2}{(1-\xi^2)^2}\sum_{t=1}^T\alpha_t^3\Big),               \label{xx1}\\
&\sum_{t=1}^T\|y_t-{\bf 1}_N\otimes \bar{y}_t\|=O\Big(\frac{N^2B_1\gamma\xi}{1-\xi}\sum_{t=1}^T\alpha_t\Big)           \nonumber\\
&\hspace{3.3cm}+O\Big(\frac{\gamma\xi}{1-\xi}V_T^g\Big),                             \label{ap12}\\
&\sum_{t=1}^T\alpha_t\|y_t-{\bf 1}_N\otimes \bar{y}_t\|^2=O\Big(\frac{N^4B_1^2\gamma^4}{(1-\xi^2)^4}\sum_{t=1}^T\alpha_t^3\Big)           \nonumber\\
&\hspace{3.8cm}+O\Big(\frac{\gamma^2}{(1-\xi^2)^2}V_{T,\alpha_t}^g\Big).             \label{xx2}
\end{align}
\end{lemma}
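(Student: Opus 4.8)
The plan is to establish the four bounds in Lemma~\ref{l3} by first deriving a uniform-in-time contraction estimate for the consensus errors $\|\nu_t-\mathbf{1}_N\otimes\bar\nu_t\|$ and $\|y_t-\mathbf{1}_N\otimes\bar y_t\|$ in the spirit of the perturbed-averaging bound (Lemma~2 in \cite{lee2017sublinear}) already invoked in the proof of Lemma~\ref{l2}, and then to sum the resulting geometric series against the stepsize sequence $\alpha_t=1/\sqrt t$. The key observation is that the "input" perturbations driving both recursions are controlled: for the $\nu$-recursion~(\ref{9}) the perturbation is $\psi(x_{t+1})-\psi(x_t)$, which by the mean value theorem and Assumption~\ref{a1}.4 (boundedness of $\nabla\psi_i$ by $G$) is bounded by $G\|x_{t+1}-x_t\|\leq G^2(1+G+NB_1)\alpha_t$ using~(\ref{ap4}); for the $y$-recursion~(\ref{10}) the perturbation is $\epsilon_{t+1}^y=\nabla_2f_{t+1}(x_{t+1},\nu_{t+1})-\nabla_2f_t(x_t,\nu_t)$, which we split as a "spatial" part $\nabla_2f_{t+1}(x_{t+1},\nu_{t+1})-\nabla_2f_{t+1}(x_t,\nu_t)$, bounded via the $L_1$-Lipschitz continuity in Assumption~\ref{a1}.3 by $L_1(\|x_{t+1}-x_t\|+\|\nu_{t+1}-\nu_t\|)=O(\alpha_t)$, plus a "temporal" part $\nabla_2f_{t+1}(x_t,\nu_t)-\nabla_2f_t(x_t,\nu_t)$, whose summed norm is by definition at most $V_T^g$ (per~(\ref{6})), and whose summed norm against $\alpha_t$, after squaring, is what $V_{T,\alpha_t}^g$ in~(\ref{6b}) captures.

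**First I would** write down explicitly, for each recursion, the analogue of~(\ref{ap7}): $\|\nu_{i,t+1}-\bar\nu_{t+1}\|\leq C\gamma\xi^t+\gamma\sum_{l=1}^{t-1}\xi^{t-l}\sum_j\|\epsilon_{j,l+1}^\nu\|+\frac1N\sum_j\|\epsilon_{j,t+1}^\nu\|+\|\epsilon_{i,t+1}^\nu\|$ where $\epsilon_{j,l+1}^\nu=\psi_j(x_{j,l+1})-\psi_j(x_{j,l})$, and similarly for $y$ with $\epsilon^y$. Summing over $i$ gives $\|\nu_t-\mathbf{1}_N\otimes\bar\nu_t\|\leq N(\ldots)$. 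Then, to get~(\ref{ap11}), I sum over $t=1,\dots,T$, exchange the order of summation in the double sum $\sum_t\sum_l\xi^{t-l}\|\epsilon^\nu_{l+1}\|$ to obtain $\frac{1}{1-\xi}\sum_l\|\epsilon^\nu_{l+1}\|$, and substitute $\|\epsilon^\nu_{l+1}\|=O(\alpha_l)$; the $\gamma\xi^t$ term sums to a $T$-independent constant absorbed into $O(\sum_t\alpha_t)$ since $\sum_t\alpha_t\to\infty$. For~(\ref{ap12}) the same manipulation produces the $V_T^g$ term from the temporal part of $\epsilon^y$ and the $O(\sum_t\alpha_t)$ term from the spatial part. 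For the $\alpha_t$-weighted squared versions~(\ref{xx1}) and~(\ref{xx2}), I would square the pointwise bound, use $(\sum a_k)^2\leq(\#\text{terms})\sum a_k^2$ or a discrete Young/Cauchy–Schwarz inequality on the convolution $\sum_l\xi^{t-l}a_l$ (giving $\frac{1}{1-\xi}\sum_l\xi^{t-l}a_l^2$), then multiply by $\alpha_t$ and sum, again swapping the order of summation to pull out $\frac{1}{(1-\xi)^2}$ or $\frac{1}{(1-\xi^2)^2}$ and leaving $\sum_l\alpha_l a_l^2=O(\sum_l\alpha_l^3)$ for the spatial/initial contributions and $O(V_{T,\alpha_t}^g)$ for the temporal contribution.

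**The main obstacle** I anticipate is bookkeeping in the squared, weighted estimates~(\ref{xx1})--(\ref{xx2}): one must carefully apply Cauchy–Schwarz to the geometric convolution so that the extra factor of $\frac{1}{1-\xi}$ (or $\frac{1}{1-\xi^2}$) appears with the right power, and then handle the cross terms when $\epsilon^y$ is split into spatial and temporal parts — the square of a sum produces a cross term that must be dominated by the two "diagonal" terms via $2ab\leq a^2+b^2$, which is why the final bound in~(\ref{xx2}) carries both $\sum_t\alpha_t^3$ and $V_{T,\alpha_t}^g$ with inflated constants $\gamma^4/(1-\xi^2)^4$ and $\gamma^2/(1-\xi^2)^2$ respectively. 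A secondary subtlety is that~(\ref{xx1})--(\ref{xx2}) use $\alpha_t$-weighting on the left but the perturbations involve $\alpha_l$ for $l\leq t$; since $\alpha_l$ is decreasing, $\alpha_t\leq\alpha_l$, so after the order swap one gets $\sum_l\alpha_l a_l^2$ times a geometric factor — this monotonicity is what makes the $\sum\alpha_t^3$ form come out cleanly, and I would flag it explicitly rather than hide it. Everything else — the $T$-independent geometric sums, the use of~(\ref{ap4}) and Lipschitz bounds — is routine substitution.
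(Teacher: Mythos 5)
Your proposal is correct and follows essentially the same route as the paper's proof: the perturbed-consensus bound of Lemma 2 in \cite{lee2017sublinear} applied to the $\nu$- and $y$-recursions, the perturbation bounds via (\ref{ap4}) together with the spatial/temporal split of $\epsilon^y$ (producing the $V_T^g$ and $V_{T,\alpha_t}^g$ terms), the order-of-summation swap on the geometric convolutions, and the monotonicity of $\alpha_t$ for the $\alpha_t$-weighted squared estimates. The only minor deviations are cosmetic: the paper treats the squared convolution with the term-count inequality plus the identity $\sum_{k\geq 0}(k+1)\eta^k=(1-\eta)^{-2}$ (with $\eta=\xi^2$) rather than your Cauchy--Schwarz variant, and where you assert $\|\nu_{t+1}-\nu_t\|=O(\alpha_t)$ pointwise the paper instead bounds it by $2\|\nu_t-\mathbf{1}_N\otimes\bar{\nu}_t\|+G\|x_{t+1}-x_t\|$ and only sums, which sidesteps having to justify a pointwise rate for the consensus error.
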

\begin{proof}
It is easy to see that (\ref{7-2}) can be rewritten as
\begin{align}
\nu_{i,t+1}=\sum_{j=1}^N a_{ij,t}\nu_{j,t}+\epsilon_{i,t+1}^\nu,           \label{ap13}
\end{align}
where $\epsilon_{i,t+1}^\nu:=\psi_i(x_{i,t+1})-\psi_i(x_{i,t})$. For (\ref{ap13}), invoking Lemma 2 in \cite{lee2017sublinear} can obtain that
\begin{align}
\|\nu_{i,t+1}-\bar{\nu}_{t+1}\|&\leq N\gamma\xi^t\max_{i\in[N]}\|\nu_{i,1}\|+\gamma\sum_{l=1}^{t-1}\xi^{t-l}\sum_{j=1}^N\|\epsilon_{j,l+1}^\nu\|         \nonumber\\
&\hspace{0.4cm}+\frac{1}{N}\sum_{j=1}^N \|\epsilon_{j,t+1}^\nu\|+\|\epsilon_{i,t+1}^\nu\|.         \label{ap14}
\end{align}
Appealing to $\|\nabla\psi_i(x_i)\|\leq G$ in Assumption \ref{a1}.4, it can be concluded that
\begin{align}
\|\epsilon_{i,t+1}^\nu\|\leq G\|x_{i,t+1}-x_{i,t}\|,            \nonumber
\end{align}
which implies that
\begin{align}
\sum_{i=1}^N\|\epsilon_{i,t+1}^\nu\|&\leq G\sum_{i=1}^N\|x_{i,t+1}-x_{i,t}\|         \nonumber\\
&\leq \sqrt{N}G\|x_{t+1}-x_t\|        \nonumber\\
&\leq \sqrt{N}G^2(1+G+NB_1)\alpha_t,            \label{ap15}
\end{align}
where the second inequality has exploited the fact that $\sum_{i=1}^N \|z_i\|\leq \sqrt{N}\sqrt{\sum_{i=1}^N \|z_i\|^2}$ for any vectors $z_i$'s, and (\ref{ap4}) has been used in the last inequality.

Moreover, for a sequence $\{\beta_t\}$, it is easy to verify that
\begin{align}
\sum_{t=1}^T\sum_{l=1}^{t-1}\xi^{t-l}\beta_{l+1}=\sum_{l=1}^{T-1}\xi^l\sum_{t=2}^{T+1-l}\beta_t\leq\frac{\xi}{1-\xi}\sum_{t=1}^T\beta_t.          \label{ap16}
\end{align}

Now, substituting (\ref{ap15}) and (\ref{ap16}) into (\ref{ap14}), together with
\begin{align}
\sum_{t=1}^T\|\nu_t-{\bf 1}_N\otimes\bar{\nu}_t\|\leq\sum_{t=1}^T\sum_{i=1}^N\|\nu_{i,t}-\bar{\nu}_t\|,       \nonumber
\end{align}
finishes the proof of (\ref{ap11}).

For (\ref{xx1}), invoking (\ref{ap14}) leads to
\begin{align}
&\|\nu_{i,t+1}-\bar{\nu}_{t+1}\|^2        \nonumber\\
&\leq 4N^2\gamma^2\xi^{2t}\max_{i\in[N]}\|\nu_{i,1}\|^2+\frac{4}{N^2}\big(\sum_{j=1}^N \|\epsilon_{j,t+1}^\nu\|\big)^2         \nonumber\\
&\hspace{0.2cm}+4\gamma^2 t\sum_{l=1}^{t-1}\xi^{2(t-l)}\big(\sum_{j=1}^N\|\epsilon_{j,l+1}^\nu\|\big)^2+4\|\epsilon_{i,t+1}^\nu\|^2,         \label{xx3}
\end{align}
where the fact that $(s_1+\cdots+s_m)^2\leq m(s_1^2+\cdots+s_m^2)$ for $s_i\geq0,i\in[m]$ has been utilized.

Moreover, for a sequence $\{\beta_t\}$ and $\eta\in(0,1)$, it is easy to verify that
\begin{align}
\sum_{t=1}^T t\sum_{l=1}^{t-1}\eta^{t-l}\beta_{l+1}\leq S\sum_{t=1}^T\beta_t,         \label{xx4}
\end{align}
where $S:=\sum_{k=0}^\infty (k+1)\eta^k$. In addition, it can be obtained that $S-\eta S=\sum_{k=0}^\infty\eta^k=1/(1-\eta)$, thus implying $S=1/(1-\eta)^2$. Therefore, one has that
\begin{align}
\sum_{t=1}^T t\sum_{l=1}^{t-1}\eta^{t-l}\beta_{l+1}\leq \frac{1}{(1-\eta)^2}\sum_{t=1}^T\beta_t.         \label{xx5}
\end{align}

In light of the nonincreasing property of $\alpha_t$, it can be concluded that $\sum_{t=1}^T\alpha_t t\sum_{l=1}^{t-1}\eta^{t-l}\beta_{l+1}\leq \sum_{t=1}^T t\sum_{l=1}^{t-1}\eta^{t-l}(\alpha_{l+1}\beta_{l+1})$, which, combining with (\ref{xx3}), (\ref{ap15}) and (\ref{xx5}), results in (\ref{xx1}).

As for (\ref{ap12}), bearing (\ref{ap6}) in mind, it can be obtained that
\begin{align}
\|\epsilon_{i,t+1}^y\|&=\|\nabla_2 f_{i,t+1}(x_{i,t+1},\nu_{i,t+1})-\nabla_2 f_{i,t}(x_{i,t},\nu_{i,t})\|         \nonumber\\
&\leq \|\nabla_2 f_{i,t}(x_{i,t+1},\nu_{i,t+1})-\nabla_2 f_{i,t}(x_{i,t},\nu_{i,t})\|                     \nonumber\\
&\hspace{-0.6cm}+\|\nabla_2 f_{i,t+1}(x_{i,t+1},\nu_{i,t+1})-\nabla_2 f_{i,t}(x_{i,t+1},\nu_{i,t+1})\|           \nonumber\\
&\leq L_1(\|x_{i,t+1}-x_{i,t}\|+\|\nu_{i,t+1}-\nu_{i,t}\|)                     \nonumber\\
&\hspace{-0.6cm}+\max_{x_i\in X_i,z_i\in\mathbb{R}^d}\|\nabla_2f_{i,t+1}(x_i,z_i)-\nabla_2f_{i,t}(x_i,z_i)\|,           \nonumber
\end{align}
where the last inequality has leveraged Assumption \ref{a1}.3. Therefore, one has that
\begin{align}
&\sum_{i=1}^N\|\epsilon_{i,t+1}^y\|\leq L_1(\sum_{i=1}^N\|x_{i,t+1}-x_{i,t}\|+\sum_{i=1}^N\|\nu_{i,t+1}-\nu_{i,t}\|)                     \nonumber\\
&\hspace{0.4cm}+\sum_{i=1}^N\max_{x_i\in X_i,z_i\in\mathbb{R}^d}\|\nabla_2f_{i,t+1}(x_i,z_i)-\nabla_2f_{i,t}(x_i,z_i)\|           \nonumber\\
&\leq \sum_{i=1}^N\max_{x_i\in X_i,z_i\in\mathbb{R}^d}\|\nabla_2f_{i,t+1}(x_i,z_i)-\nabla_2f_{i,t}(x_i,z_i)\|                     \nonumber\\
&\hspace{0.4cm}+\sqrt{N}L_1(\|x_{t+1}-x_t\|+\|\nu_{t+1}-\nu_t\|),           \label{ap17}
\end{align}
where the second inequality has employed the fact that $\sum_{i=1}^N \|z_i\|\leq \sqrt{N}\sqrt{\sum_{i=1}^N \|z_i\|^2}$ for any vectors $z_i$'s.

Let us now analyze the term $\|\nu_{t+1}-\nu_t\|$. In light of (\ref{9}), one has that
\begin{align}
&\|\nu_{t+1}-\nu_t\|          \nonumber\\
&=\|(\mathcal{A}_t-I)(I-\frac{1}{N}{\bf 1}_N{\bf 1}_N^\top\otimes I_d)\nu_t+\psi(x_{t+1})-\psi(x_t)\|     \nonumber\\
&\leq \|(A_t-I)\otimes I_d(\nu_t-{\bf 1}_N\otimes\bar{\nu}_t)\|+\|\psi(x_{t+1})-\psi(x_t)\|          \nonumber\\
&\leq \|A_t-I\|\|\nu_t-{\bf 1}_N\otimes\bar{\nu}_t\|+G\|x_{t+1}-x_t\|                                \nonumber\\
&\leq 2\|\nu_t-{\bf 1}_N\otimes\bar{\nu}_t\|+G\|x_{t+1}-x_t\|,                                       \label{ap18}
\end{align}
where the second and third inequalities have used Assumption \ref{a1}.4 and $\|A_t-I\|\leq 2$, respectively.

Inserting (\ref{ap18}) into (\ref{ap17}) gives rise to
\begin{align}
&\sum_{i=1}^N\|\epsilon_{i,t+1}^y\|         \nonumber\\
&\leq \sqrt{N}L_1(1+G)\|x_{t+1}-x_t\|+2\sqrt{N}L_1\|\nu_t-{\bf 1}_N\otimes\bar{\nu}_t\|        \nonumber\\
&\hspace{0.4cm}+\sum_{i=1}^N\max_{x_i\in X_i,z_i\in\mathbb{R}^d}\|\nabla_2f_{i,t+1}(x_i,z_i)-\nabla_2f_{i,t}(x_i,z_i)\|,          \nonumber
\end{align}
which, together with (\ref{ap7}) and $\sum_{t=1}^T\|y_t-{\bf 1}_N\otimes\bar{y}_t\|\leq \sum_{t=1}^T\sum_{i=1}^N\|y_{i,t}-\bar{y}_t\|$, leads to (\ref{ap12}).

Finally, (\ref{xx2}) can be similarly derived as (\ref{xx1}). This ends the proof.
\end{proof}

With the above preparations, it is now ready to prove Theorem \ref{t1}.

{\bf Proof of Theorem \ref{t1}:} By (\ref{8}) and (\ref{2}), one can obtain that
\begin{align}
\|x_{t+1}-x_t^*\|^2&\leq\|x_t-x_t^*-\alpha_t(\nabla_1f_t(x_t,\nu_t)+\nabla\psi(x_t)y_t)\|        \nonumber\\
&\hspace{-0.4cm}=\|x_t-x_t^*\|^2+\alpha_t^2\|\nabla_1f_t(x_t,\nu_t)+\nabla\psi(x_t)y_t\|^2          \nonumber\\
&\hspace{0.0cm}-2\alpha_t\langle x_t-x_t^*,\nabla_1f_t(x_t,\nu_t)+\nabla\psi(x_t)y_t\rangle.      \nonumber
\end{align}
Note that $\|\nabla_1f_t(x_t,\nu_t)+\nabla\psi(x_t)y_t\|^2=\|\nabla_1f_t(x_t,\nu_t)+\nabla\psi(x_t){\bf 1}_N\otimes\bar{y}_t+\nabla\psi(x_t)(y_t-{\bf 1}_N\otimes\bar{y}_t)\|^2\leq 2\|\nabla_1f_t(x_t,\nu_t)+\nabla\psi(x_t){\bf 1}_N\otimes\bar{y}_t\|^2+2\|\nabla\psi(x_t)\|^2\|(y_t-{\bf 1}_N\otimes\bar{y}_t)\|^2$. As a result, it can be deduced that
\begin{align}
&\|x_{t+1}-x_t^*\|^2        \nonumber\\
&\leq \|x_t-x_t^*\|^2+2\alpha_t^2\|\nabla_1f_t(x_t,\nu_t)+\nabla\psi(x_t){\bf 1}_N\otimes\bar{y}_t\|^2          \nonumber\\
&\hspace{0.4cm}+2\alpha_t^2\|\nabla\psi(x_t)\|^2\|(y_t-{\bf 1}_N\otimes\bar{y}_t)\|^2                              \nonumber\\
&\hspace{0.4cm}-2\alpha_t\langle x_t-x_t^*,\nabla\psi(x_t)(y_t-{\bf 1}_N\otimes\bar{y}_t)\rangle      \nonumber\\
&\hspace{0.4cm}-2\alpha_t\langle x_t-x_t^*,\nabla_1f_t(x_t,\nu_t)+\nabla\psi(x_t){\bf 1}_N\otimes\bar{y}_t\rangle,                  \nonumber
\end{align}
which, together with Assumption \ref{a1}.4 and (\ref{ap10}), leads to
\begin{align}
&\|x_{t+1}-x_t^*\|^2        \nonumber\\
&\leq \|x_t-x_t^*\|^2+2\alpha_t^2G^2(1+G)^2         \nonumber\\
&\hspace{0.4cm}+2\alpha_t^2G^2\|y_t-{\bf 1}_N\otimes\bar{y}_t\|^2          \nonumber\\
&\hspace{0.4cm}+2\alpha_t\|x_t-x_t^*\|\|\nabla\psi(x_t)\|\|y_t-{\bf 1}_N\otimes\bar{y}_t\|      \nonumber\\
&\hspace{0.4cm}-2\alpha_t\langle x_t-x_t^*,\nabla_1f_t(x_t,\nu_t)+\nabla\psi(x_t){\bf 1}_N\otimes\bar{y}_t\rangle         \nonumber\\
&\leq \|x_t-x_t^*\|^2+2\alpha_t^2G^2(1+G)^2          \nonumber\\
&\hspace{0.4cm}+2\alpha_t^2G^2\|y_t-{\bf 1}_N\otimes\bar{y}_t\|^2          \nonumber\\
&\hspace{0.4cm}+4\sqrt{N}BG\alpha_t\|y_t-{\bf 1}_N\otimes\bar{y}_t\|                                                  \nonumber\\
&\hspace{0.4cm}-2\alpha_t\langle x_t-x_t^*,\nabla_1f_t(x_t,\nu_t)+\nabla\psi(x_t){\bf 1}_N\otimes\bar{y}_t\rangle,                  \label{ap19}
\end{align}
where the last inequality has applied Assumption \ref{a1}.1, i.e., $\|x\|\leq \sqrt{N}B$ for any $x=col(x_1,\ldots,x_N)\in X$.

For the last term in (\ref{ap19}), by noting $\bar{\nu}_t=\nu(x_t)$ in Lemma \ref{l1}, it can be obtained that
\begin{align}
&-2\alpha_t\langle x_t-x_t^*,\nabla_1f_t(x_t,\nu_t)+\nabla\psi(x_t){\bf 1}_N\otimes\bar{y}_t\rangle           \nonumber\\
&=-2\alpha_t\langle x_t-x_t^*,\nabla_1f_t(x_t,{\bf 1}_N\otimes\bar{\nu}_t)         \nonumber\\
&\hspace{1.4cm}+\nabla\psi(x_t){\bf 1}_N\otimes\frac{1}{N}\sum_{i=1}^N\nabla_2f_{i,t}(x_{i,t},\bar{\nu}_t)\rangle     \nonumber\\
&\hspace{0.4cm}-2\alpha_t\langle x_t-x_t^*,\nabla_1f_t(x_t,\nu_t)-\nabla_1f_t(x_t,{\bf 1}_N\otimes\bar{\nu}_t)\rangle       \nonumber\\
&\hspace{0.4cm}-2\alpha_t\langle x_t-x_t^*,\nabla\psi(x_t){\bf 1}_N\otimes\frac{1}{N}\sum_{i=1}^N[\nabla_2f_{i,t}(x_{i,t},\nu_{i,t})   \nonumber\\
&\hspace{1.4cm}-\nabla_2f_{i,t}(x_{i,t},\bar{\nu}_t)]\rangle            \nonumber\\
&\leq 2\alpha_t[f_t(x_t^*)-f_t(x_t)]          \nonumber\\
&\hspace{0.4cm}+4\sqrt{N}B\alpha_t\|\nabla_1f_t(x_t,\nu_t)-\nabla_1f_t(x_t,{\bf 1}_N\otimes\bar{\nu}_t)\|       \nonumber\\
&\hspace{0.4cm}+4\sqrt{N}B\alpha_t\|\nabla\psi(x_t)\|\|{\bf 1}_N\otimes\frac{1}{N}\sum_{i=1}^N[\nabla_2f_{i,t}(x_{i,t},\nu_{i,t})   \nonumber\\
&\hspace{1.4cm}-\nabla_2f_{i,t}(x_{i,t},\bar{\nu}_t)]\|,                 \nonumber
\end{align}
where the convexity of $f_t$, the Cauchy-Schwarz inequality, and Assumption \ref{a1}.1 have been utilized in the last inequality. Applying Assumptions \ref{a1}.3 and \ref{a1}.4 to the above inequality can further yield that
\begin{align}
&-2\alpha_t\langle x_t-x_t^*,\nabla_1f_t(x_t,\nu_t)+\nabla\psi(x_t){\bf 1}_N\otimes\bar{y}_t\rangle           \nonumber\\
&\leq 2\alpha_t[f_t(x_t^*)-f_t(x_t)]        \nonumber\\
&\hspace{0.4cm}+4\sqrt{N}BL_1(1+G)\alpha_t\|\nu_t-{\bf 1}_N\otimes\bar{\nu}_t\|.                 \label{ap20}
\end{align}

Combining (\ref{ap19}) with (\ref{ap20}), one can obtain that
\begin{align}
&f_t(x_t)-f_t(x_t^*)\leq \frac{1}{2\alpha_t}(\|x_t-x_t^*\|^2-\|x_{t+1}-x_{t+1}^*\|^2)         \nonumber\\
&\hspace{0.4cm}+\frac{1}{2\alpha_t}(\|x_{t+1}-x_{t+1}^*\|^2-\|x_{t+1}-x_t^*\|^2)            \nonumber\\
&\hspace{0.4cm}+\alpha_tG^2(1+G)^2+\alpha_t G^2\|y_t-{\bf 1}_N\otimes\bar{y}_t\|^2                  \nonumber\\
&\hspace{0.4cm}+2\sqrt{N}BL_1(1+G)\|\nu_t-{\bf 1}_N\otimes \bar{\nu}_t\|           \nonumber\\
&\hspace{0.4cm}+2\sqrt{N}BG\|y_t-{\bf 1}_N\otimes\bar{y}_t\|,       \nonumber
\end{align}
which, by the summation over $t\in[T]$, leads to
\begin{align}
\mathcal{R}_T&\leq \sum_{t=1}^T\frac{1}{2\alpha_t}(\|x_t-x_t^*\|^2-\|x_{t+1}-x_{t+1}^*\|^2)         \nonumber\\
&\hspace{-0.4cm}+\sum_{t=1}^T\frac{1}{2\alpha_t}(\|x_{t+1}-x_{t+1}^*\|^2-\|x_{t+1}-x_t^*\|^2)            \nonumber\\
&\hspace{-0.4cm}+G^2(1+G)^2\sum_{t=1}^T\alpha_t+G^2\sum_{t=1}^T\alpha_t \|y_t-{\bf 1}_N\otimes\bar{y}_t\|^2                  \nonumber\\
&\hspace{-0.4cm}+2\sqrt{N}BL_1(1+G)\sum_{t=1}^T\|\nu_t-{\bf 1}_N\otimes \bar{\nu}_t\|           \nonumber\\
&\hspace{-0.4cm}+2\sqrt{N}BG\sum_{t=1}^T\|y_t-{\bf 1}_N\otimes\bar{y}_t\|.                    \label{ap21}
\end{align}

For the first term on the right-hand side of (\ref{ap21}), it is easy to calculate that
\begin{align}
&\sum_{t=1}^T\frac{1}{2\alpha_t}(\|x_t-x_t^*\|^2-\|x_{t+1}-x_{t+1}^*\|^2)    \nonumber\\
&=\frac{1}{2\alpha_1}\|x_{1}-x_{1}^*\|^2-\frac{1}{2\alpha_T}\|x_{T+1}-x_{T+1}^*\|^2       \nonumber\\
&\hspace{0.4cm}+\frac{1}{2}\sum_{t=2}^T\Big(\frac{1}{\alpha_t}-\frac{1}{\alpha_{t-1}}\Big)\|x_{t}-x_{t}^*\|^2       \nonumber\\
&\leq \frac{1}{\alpha_1}(\|x_{1}\|^2+\|x_{1}^*\|^2)+\sum_{t=2}^T\Big(\frac{1}{\alpha_t}-\frac{1}{\alpha_{t-1}}\Big)(\|x_{t}\|^2+\|x_{t}^*\|^2)       \nonumber\\
&\leq \frac{2NB^2}{\alpha_T},               \label{ap22}
\end{align}
where $\|z_1+z_2\|^2\leq 2(\|z_1\|^2+\|z_2\|^2)$ for any two vectors $z_1,z_2$ and $\frac{1}{\alpha_t}-\frac{1}{\alpha_{t-1}}>0$ have been used in the first inequality, and Assumption \ref{a1}.1 has been employed in the last inequality.

For the second term on the right-hand side of (\ref{ap21}), one has that
\begin{align}
&\sum_{t=1}^T\frac{1}{2\alpha_t}(\|x_{t+1}-x_{t+1}^*\|^2-\|x_{t+1}-x_t^*\|^2)    \nonumber\\
&=\sum_{t=1}^T\frac{1}{2\alpha_t}(x_{t+1}^*-x_{t}^*)^\top(x_{t}^*+x_{t+1}^*-2x_{t+1})    \nonumber\\
&\leq 2\sqrt{N}B\sum_{t=1}^T\frac{1}{\alpha_t}\|x_{t+1}^*-x_{t}^*\|            \nonumber\\
&= 2\sqrt{N}B V_{T,\alpha_t^{-1}}^p,         \label{ap23}
\end{align}
where the first inequality was resulted from the Cauchy-Schwarz inequality and Assumption \ref{a1}.1.

Finally, by appealing to (\ref{11}) and Lemma \ref{l3}, substituting (\ref{ap22})-(\ref{ap23}) into (\ref{ap21}) can complete the proof.
\hfill\rule{2mm}{2mm}

\subsection{Proof of Theorem \ref{t2}}

The proof is similar to that of Theorem \ref{t1}, but Lemmas \ref{l1}-\ref{l3} need to be modified in the stochastic scenario.

\begin{lemma}\label{l4}
Under Assumption \ref{a2}, for all $t\geq 0$, there holds
\begin{align}
\bar{\nu}_t&:=\frac{1}{N}\sum_{i=1}^N \nu_{i,t}=\frac{1}{N}\sum_{i=1}^N \psi_i(x_{i,t})=\nu(x_t),        \label{ap24}\\
\bar{y}_t&:=\frac{1}{N}\sum_{i=1}^N y_{i,t}=\frac{1}{N}\sum_{i=1}^N\tilde{\nabla}_2 f_{i,t}(x_{i,t},\nu_{i,t}).       \label{ap25}
\end{align}
\end{lemma}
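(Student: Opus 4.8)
The plan is to mimic the proof of Lemma~\ref{l1} almost verbatim, exploiting the fact that the stochastic modification in Algorithm~2 only replaces the gradient quantities, leaving the consensus structure and the map $\psi_i$ itself untouched. First I would observe that the $\nu$-update of Algorithm~2 coincides \emph{exactly} with (\ref{7-2}): indeed (\ref{7-2}) involves the function $\psi_i$ rather than its gradient $\nabla\psi_i$, and Algorithm~2 only swaps $\nabla_1 f_{i,t}$, $\nabla\psi_i$, $\nabla_2 f_{i,t+1}$, $\nabla_2 f_{i,t}$ for their stochastic counterparts. Hence the compact form (\ref{9}) is unchanged, and (\ref{ap24}) follows precisely as (\ref{ap1}): multiply (\ref{9}) by $\mathbf{1}_N^\top/N$, use column-stochasticity of $A_t$ from Assumption~\ref{a2} to obtain $\bar{\nu}_{t+1}=\bar{\nu}_t+\frac1N\sum_{i=1}^N\psi_i(x_{i,t+1})-\frac1N\sum_{i=1}^N\psi_i(x_{i,t})$, telescope in $t$, and invoke the initialization $\nu_{i,0}=\psi_i(x_{i,0})$ so the telescoping constant is zero; the definition (\ref{3}) of $\nu$ then identifies the limit with $\nu(x_t)$.

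For (\ref{ap25}) I would apply the same averaging argument to the $y$-recursion of Algorithm~2, which now reads $y_{i,t+1}=\sum_{j=1}^N a_{ij,t}y_{j,t}+\tilde{\nabla}_2 f_{i,t+1}(x_{i,t+1},\nu_{i,t+1})-\tilde{\nabla}_2 f_{i,t}(x_{i,t},\nu_{i,t})$. Left-multiplying the stacked version by $\mathbf{1}_N^\top/N$ and using column-stochasticity gives $\bar{y}_{t+1}=\bar{y}_t+\frac1N\sum_{i=1}^N\tilde{\nabla}_2 f_{i,t+1}(x_{i,t+1},\nu_{i,t+1})-\frac1N\sum_{i=1}^N\tilde{\nabla}_2 f_{i,t}(x_{i,t},\nu_{i,t})$, so $\bar{y}_t-\frac1N\sum_{i=1}^N\tilde{\nabla}_2 f_{i,t}(x_{i,t},\nu_{i,t})$ is independent of $t$; evaluating at $t=0$ and using the initialization $y_{i,0}=\tilde{\nabla}_2 f_{i,0}(x_{i,0},\nu_{i,0})$ shows this constant vanishes, which yields (\ref{ap25}).

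I expect essentially no obstacle here: this is a pathwise (realization-by-realization) identity that uses none of the unbiasedness or variance bounds in Assumption~\ref{a3}, only the doubly-stochastic structure of Assumption~\ref{a2} and the matching of the initial conditions $\nu_{i,0}$ and $y_{i,0}$ to the quantities being tracked. The one point worth stating carefully is precisely that the $y$-initialization in Algorithm~2 uses the \emph{stochastic} gradient $\tilde{\nabla}_2 f_{i,0}$ (so the telescoping constant is zero), whereas the $\nu$-initialization is unchanged because $\psi_i$ is not a gradient. With Lemma~\ref{l4} in hand, the stochastic analogues of Lemmas~\ref{l2}--\ref{l3} — and thence Theorem~\ref{t2} — then proceed exactly as in the deterministic case, with the extra variance terms $\sigma_1^2,\sigma_2^2$ entering only when one passes to expectations in the consensus-error estimates.
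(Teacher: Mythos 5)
Your argument is correct and is exactly the paper's route: the paper proves Lemma~\ref{l4} by noting it is identical to the proof of Lemma~\ref{l1}, i.e., left-multiply the stacked recursions by $\mathbf{1}^\top/N$, use column-stochasticity of $A_t$, telescope, and invoke the initializations $\nu_{i,0}=\psi_i(x_{i,0})$ and $y_{i,0}=\tilde{\nabla}_2 f_{i,0}(x_{i,0},\nu_{i,0})$. Your added observation that this is a pathwise identity needing nothing from Assumption~\ref{a3} is accurate and consistent with the paper.
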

\begin{proof}
The proof is the same as that of Lemma \ref{l1}.
\end{proof}

\begin{lemma}\label{l5}
Under Assumptions \ref{a1} and \ref{a2}, there holds
\begin{align}
\|y_t-{\bf 1}_N\otimes \bar{y}_t\|&=O(1),                              \label{ap26}\\
\mathbb{E}(\|x_{t+1}-x_t\|)&=O(\alpha_t),~~~~\forall t\geq 0.            \label{ap27}
\end{align}
\end{lemma}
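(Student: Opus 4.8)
The plan is to mirror the proofs of Lemmas~\ref{l1} and \ref{l2}, replacing each deterministic gradient bound by a first- or second-moment bound furnished by Assumption~\ref{a3}; the geometric-decay estimate of Lemma~2 in \cite{lee2017sublinear} carries over verbatim, since it uses only the row/column stochasticity and $Q$-strong connectivity of $\mathcal{G}_t$, which are unchanged.

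For (\ref{ap26}), I would rewrite the $y$-update of Algorithm~2 as the perturbed consensus recursion $y_{i,t+1}=\sum_{j=1}^N a_{ij,t}y_{j,t}+\tilde{\epsilon}_{i,t+1}^y$ with $\tilde{\epsilon}_{i,t+1}^y:=\tilde{\nabla}_2 f_{i,t+1}(x_{i,t+1},\nu_{i,t+1})-\tilde{\nabla}_2 f_{i,t}(x_{i,t},\nu_{i,t})$ and apply Lemma~2 in \cite{lee2017sublinear} to obtain the analog of (\ref{ap7}). Then I would bound the perturbation by the triangle inequality, $\|\tilde{\epsilon}_{i,t+1}^y\|\leq\|\tilde{\nabla}_2 f_{i,t+1}(x_{i,t+1},\nu_{i,t+1})\|+\|\tilde{\nabla}_2 f_{i,t}(x_{i,t},\nu_{i,t})\|$; writing each stochastic gradient as true gradient plus noise, Assumption~\ref{a1}.4 together with $\mathbb{E}[\|\tilde{\nabla}_2-\nabla_2\|\,|\,x_{i,t},\nu_{i,t}]\leq\sigma_2$ (Jensen applied to (\ref{17b})) gives $\mathbb{E}\|\tilde{\epsilon}_{i,t+1}^y\|\leq 2(G+\sigma_2)$, and the analogous second-moment bound $\mathbb{E}\|\tilde{\epsilon}_{i,t+1}^y\|^2=O(1)$ follows from (\ref{16}) and (\ref{17b}) after conditioning on $(x_{i,t},\nu_{i,t})$. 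Substituting into the geometric sum and summing the series exactly as after (\ref{ap8}) yields $\mathbb{E}\|y_t-{\bf 1}_N\otimes\bar{y}_t\|=O(1)$ and $\mathbb{E}\|y_t-{\bf 1}_N\otimes\bar{y}_t\|^2=O(1)$, the constants now also being proportional to powers of $\sigma_2$; if the pathwise statement in (\ref{ap26}) is intended, the same computation gives the deterministic $O(1)$ once an almost-sure bound on the gradient noise is assumed.

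For (\ref{ap27}), by the non-expansiveness (\ref{2}) of the projection applied to the stochastic counterpart of (\ref{8}),
\begin{align}
\|x_{t+1}-x_t\|&\leq\alpha_t\big(\|\tilde{\nabla}_1 f_t(x_t,\nu_t)\|+\|\tilde{\nabla}\psi(x_t)\|\,\|{\bf 1}_N\otimes\bar{y}_t\|\nonumber\\
&\hspace{0.8cm}+\|\tilde{\nabla}\psi(x_t)\|\,\|y_t-{\bf 1}_N\otimes\bar{y}_t\|\big).\nonumber
\end{align}
I would take expectations term by term: $\mathbb{E}\|\tilde{\nabla}_1 f_t(x_t,\nu_t)\|$ and $\mathbb{E}\|\tilde{\nabla}\psi(x_t)\|^2$ are bounded by constants independent of $T$ via Assumption~\ref{a1}.4 and (\ref{17}); for the two products, apply Cauchy--Schwarz in expectation, using $\mathbb{E}\|{\bf 1}_N\otimes\bar{y}_t\|^2=O(1)$ --- obtained from (\ref{ap25}), the inequality $\|\sum_i z_i\|^2\leq N\sum_i\|z_i\|^2$, and conditioning on $(x_t,\nu_t)$ in (\ref{16})--(\ref{17b}) --- and the second-moment bound on $\|y_t-{\bf 1}_N\otimes\bar{y}_t\|$ just established. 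Collecting these bounds gives $\mathbb{E}\|x_{t+1}-x_t\|=O(\alpha_t)$.

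The main obstacle is that Assumption~\ref{a3} controls the stochastic gradients only in second moment, not almost surely, so the purely deterministic chain of inequalities used in Lemma~\ref{l2} cannot be reproduced pathwise; the remedy is to run the whole argument in expectation and second moment, which forces some care with the statistical dependence between $\tilde{\nabla}\psi(x_t)$ and $\bar{y}_t$ (both evaluated at time $t$) --- handled by Cauchy--Schwarz after conditioning on $(x_t,\nu_t)$, so that each squared stochastic gradient splits into the squared true gradient plus bounded variance via unbiasedness. Everything else --- the consensus contraction and the geometric summations --- is inherited unchanged from the true-gradient proofs of Lemmas~\ref{l2} and \ref{l3}.
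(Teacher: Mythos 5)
Your proposal is correct and follows essentially the same route as the paper: the paper's proof of Lemma~\ref{l5} simply reruns the argument of Lemma~\ref{l2} in (conditional) expectation, the single new ingredient being the Jensen-type bound $\mathbb{E}(\|\tilde{\nabla}\|\,|\,x_{i,t})\leq G+\sigma_\sharp$ obtained from Assumption~\ref{a1}.4 and (\ref{17})--(\ref{17b}), which is exactly your estimate $\mathbb{E}\|\tilde{\epsilon}_{i,t+1}^y\|\leq 2(G+\sigma_2)$ fed into the same consensus-contraction and geometric-sum steps. Your extra care (second-moment bounds, Cauchy--Schwarz for the dependent products, and the observation that the pathwise reading of (\ref{ap26}) would need an almost-sure noise bound) is if anything more precise than the paper's terse treatment and does not change the approach.
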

\begin{proof}
It can be similarly proved to that of Lemma \ref{l2}, once noting that there holds
\begin{align}
\mathbb{E}(\|\tilde{\nabla}\||x_{i,t})&=\mathbb{E}(\|\nabla+\tilde{\nabla}-\nabla\||x_{i,t})      \nonumber\\
&\leq \|\nabla\|+\mathbb{E}(\|\tilde{\nabla}-\nabla\||x_{i,t})       \nonumber\\
&\leq \|\nabla\|+\sqrt{\mathbb{E}(\|\tilde{\nabla}-\nabla\|^2|x_{i,t})}       \nonumber\\
&\leq G+\sigma_\sharp,           \label{ap28}
\end{align}
where $\tilde{\nabla}$ can be any stochastic gradient in Assumption \ref{a3} with the corresponding true gradient $\nabla$, $\sigma_\sharp=\sigma_1$ or $\sigma_2$ depending on $\tilde{\nabla}$, the second inequality has used Jensen's inequality, and the last inequality has employed Assumption \ref{a1}.4 and (\ref{17}) in Assumption \ref{a3}.
\end{proof}

\begin{lemma}\label{l6}
Under Assumptions \ref{a1} and \ref{a2}, there holds
\begin{align}
&\mathbb{E}(\sum_{t=1}^T\|\nu_t-{\bf 1}_N\otimes \bar{\nu}_t\|)=O\Big(\frac{N\sqrt{N}\gamma\xi}{1-\xi}\sum_{t=1}^T\alpha_t\Big),               \label{st1}\\
&\mathbb{E}(\sum_{t=1}^T\alpha_t\|y_t-{\bf 1}_N\otimes \bar{y}_t\|^2)=O\Big(\frac{N^4\gamma^4}{(1-\xi^2)^4}\sum_{t=1}^T\alpha_t^3\Big)           \nonumber\\
&\hspace{0.4cm}+O\Big(\frac{\gamma^2}{(1-\xi^2)^2}V_{T,\alpha_t}^g\Big)+O\Big(\frac{N^2\gamma^2\sigma_2^2}{(1-\xi^2)^2}\sum_{t=1}^T\alpha_t\Big).             \label{st3}
\end{align}
\end{lemma}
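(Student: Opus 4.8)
The plan is to rerun the proof of Lemma~\ref{l3} almost verbatim, with two modifications: every pathwise estimate that relied on boundedness of the true gradients is replaced by its expected counterpart via Lemma~\ref{l5}, and the extra randomness of the stochastic gradients is absorbed through the unbiasedness and the variance bound (\ref{17b}) in Assumption~\ref{a3}, together with the elementary inequality $\mathbb{E}(\|\tilde{\nabla}\|\,|\,x_{i,t})\le G+\sigma_\sharp$ recorded in (\ref{ap28}). The crucial observation is that the noise enters only the $y$-recursion and not the $\nu$-recursion, whose driving increment $\psi_i(x_{i,t+1})-\psi_i(x_{i,t})$ carries no gradient; this is exactly why (\ref{st1}) has no $\sigma$-dependent term while (\ref{st3}) acquires one.

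For (\ref{st1}), as in (\ref{ap13}) write (\ref{7-2}) as $\nu_{i,t+1}=\sum_{j=1}^N a_{ij,t}\nu_{j,t}+\epsilon_{i,t+1}^\nu$ with $\epsilon_{i,t+1}^\nu=\psi_i(x_{i,t+1})-\psi_i(x_{i,t})$, apply Lemma~2 of \cite{lee2017sublinear} to get the analogue of (\ref{ap14}), and bound $\|\epsilon_{i,t+1}^\nu\|\le G\|x_{i,t+1}-x_{i,t}\|$ by Assumption~\ref{a1}.4. Summing over $i\in[N]$, taking expectations, using $\mathbb{E}(\|x_{t+1}-x_t\|)=O(\alpha_t)$ from Lemma~\ref{l5} in place of (\ref{ap4}), and invoking the geometric-sum identity (\ref{ap16}) exactly as in the derivation of (\ref{ap11}), reproduces (\ref{st1}) with $B_1$ replaced by an absorbed constant (now depending also on $\sigma_1,\sigma_2$).

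For (\ref{st3}), write the stochastic version of (\ref{7-3}) as $y_{i,t+1}=\sum_{j=1}^N a_{ij,t}y_{j,t}+\epsilon_{i,t+1}^y$ with $\epsilon_{i,t+1}^y=\tilde{\nabla}_2 f_{i,t+1}(x_{i,t+1},\nu_{i,t+1})-\tilde{\nabla}_2 f_{i,t}(x_{i,t},\nu_{i,t})$, and decompose
\begin{align}
\epsilon_{i,t+1}^y&=\delta_{i,t+1}^+-\delta_{i,t}^-\nonumber\\
&\hspace{0.3cm}+\nabla_2 f_{i,t+1}(x_{i,t+1},\nu_{i,t+1})-\nabla_2 f_{i,t}(x_{i,t},\nu_{i,t}),\nonumber
\end{align}
where $\delta_{i,t+1}^+:=\tilde{\nabla}_2 f_{i,t+1}(x_{i,t+1},\nu_{i,t+1})-\nabla_2 f_{i,t+1}(x_{i,t+1},\nu_{i,t+1})$ and $\delta_{i,t}^-:=\tilde{\nabla}_2 f_{i,t}(x_{i,t},\nu_{i,t})-\nabla_2 f_{i,t}(x_{i,t},\nu_{i,t})$ are conditionally mean-zero with conditional second moment at most $\sigma_2^2$, by (\ref{16}) and (\ref{17b}). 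The deterministic bracket is bounded exactly as in the proof of (\ref{ap12}), namely by $L_1(\|x_{i,t+1}-x_{i,t}\|+\|\nu_{i,t+1}-\nu_{i,t}\|)$ plus the per-agent gradient-variation term, and is then propagated through (\ref{ap18}), (\ref{st1}), and the expected $\nu$-version of (\ref{xx1}) precisely as (\ref{xx2}) was obtained, which yields the first two terms of (\ref{st3}). For the noise, form the $y$-analogue of the squared recursion (\ref{xx3}) via Lemma~2 of \cite{lee2017sublinear} and $(s_1+\cdots+s_m)^2\le m\sum_{j}s_j^2$, take conditional expectations so that every cross term containing $\delta^\pm$ vanishes, and use $(\sum_j\|\epsilon_{j,l+1}^y\|)^2\le N\sum_j\|\epsilon_{j,l+1}^y\|^2$; each occurrence of a noise term then contributes $O(\sigma_2^2)$ uniformly in $l$. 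Summing this constant contribution against $\alpha_t\cdot 4\gamma^2 t\sum_{l}\xi^{2(t-l)}$, using monotonicity of $\alpha_t$ and (\ref{xx5}) to collapse the double sum with a factor $1/(1-\xi^2)^2$, and collecting the factors $N^2$ (from $N\cdot N$ in the double summation over agents) and $\gamma^2$, produces exactly the remaining term $O\big(\frac{N^2\gamma^2\sigma_2^2}{(1-\xi^2)^2}\sum_{t=1}^T\alpha_t\big)$; the $\sigma_2^2$ contributions from the non-convolution terms of the recursion are of lower order and are absorbed into it.

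I expect the one genuinely delicate point to be the bookkeeping of the noise: one must take conditional expectations with respect to the $\sigma$-algebra that renders $x_{i,t+1},\nu_{i,t+1}$ measurable so that the cross terms involving $\delta_{i,t+1}^+$ truly drop out (this is what the unbiasedness in Assumption~\ref{a3} provides), and one must carry the constant $\sigma_2^2$ bound through the \emph{two} nested geometric sums---once in $\sum_l\xi^{2(t-l)}$ and once via the prefactor $t$ handled by (\ref{xx5})---to land the exact $1/(1-\xi^2)^2$ dependence claimed in (\ref{st3}). Everything else is a line-by-line repetition of the proofs of Lemmas~\ref{l1}--\ref{l3}.
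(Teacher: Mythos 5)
Your proof follows essentially the same route the paper intends: the paper omits this proof entirely, saying only that it is ``similar to that of Lemma \ref{l3} along with (\ref{ap28})'', and your reconstruction---the noise-free $\nu$-recursion yielding (\ref{st1}), the split of $\epsilon_{i,t+1}^y$ into conditionally mean-zero noise plus the deterministic gradient difference, with unbiasedness killing the cross terms and the variance bound (\ref{17b}) producing the $\sigma_2^2$ term after the two geometric sums---is precisely that argument written out. The only small item to add is a second-moment analogue of (\ref{ap27}), namely $\mathbb{E}(\|x_{t+1}-x_t\|^2)=O(\alpha_t^2)$ (obtained as in (\ref{ap28}) but with squares, using $\mathbb{E}(\|\tilde{\nabla}-\nabla\|^2)\leq\sigma_\sharp^2$ and (\ref{ap26})), since the squared convolution terms in the expected versions of (\ref{xx1}) and (\ref{xx3}) require second moments rather than the first-moment bound of Lemma \ref{l5} that you cite.
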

\begin{proof}
The proof is similar to that of Lemma \ref{l3} along with (\ref{ap28}), which is thus omitted.
\end{proof}

At this position, the proof of Theorem \ref{t2} can be given by a similar argument as in the proof of Theorem \ref{t1}, together with Lemmas \ref{l4}-\ref{l6}, which is omitted here.

\end{document}